
\documentclass[preprint,a4paper, 11pt, authoryear]{elsarticle} 
\linespread{1.5}
\usepackage{geometry}
 \geometry{
 a4paper,
 total={170mm,257mm},
 left=20mm,
 top=20mm,
 }




\usepackage{amsmath}
\usepackage{amssymb}
\usepackage{amsthm}
\usepackage{amsfonts}
\newtheorem{proposition}{Proposition}

\newtheorem{theorem}{Theorem}
\newtheorem{definition}{Definition}
\usepackage{algorithm}
\usepackage{xcolor}
\usepackage{algpseudocode}
\usepackage{mathtools}
\DeclarePairedDelimiter{\ceil}{\lceil}{\rceil}
\DeclarePairedDelimiter{\floor}{\lfloor}{\rfloor}

\usepackage{array,multirow,graphicx,booktabs,longtable}
\usepackage{pgfplots}
\pgfplotsset{compat=1.18}
\makeatletter
\renewenvironment{proof}[1][\proofname]{%
  \par\pushQED{\qed}\normalfont%
  \topsep3pt\relax 
  \trivlist\item[\hskip\labelsep\itshape #1\@addpunct{.}]%
  \ignorespaces
}{%
  \vskip-0.75cm 
  \popQED\endtrivlist\@endpefalse
}
\makeatother
\usepackage{tabularx}
\usepackage{cuted}
\usepackage{flushend}
\usepackage{float}
\usepackage{natbib}
\setlength{\bibsep}{5pt} 
\usepackage[hidelinks]{hyperref}

\begin{document}

\begin{frontmatter}

\title{On Branch-and-Price for Project Scheduling}

\author[inst1]{Maximilian Kolter$^\ast$}
\author[inst1]{Martin Grunow}            
\author[inst1]{Rainer Kolisch}

\affiliation[inst1]{organization={TUM School of Management, Department of Operations \& Technology},
            addressline={Arcisstraße 21}, 
            city={Munich},
            postcode={80333}, 
            country={Germany}}

\begin{abstract}
Integer programs for resource-constrained project scheduling problems are notoriously hard to solve due to their weak linear relaxations. Several papers have proposed reformulating project scheduling problems via Dantzig-Wolfe decomposition to strengthen their linear relaxation and decompose large problem instances. The reformulation gives rise to a master problem that has a large number of variables. Therefore, the master problem is solved by a column generation procedure embedded in a branching framework, also known as branch-and-price. While branch-and-price has been successfully applied to many problem classes, it turns out to be ineffective for most project scheduling problems. This paper identifies drivers of the ineffectiveness by analyzing the structure of the reformulated problem and the strength of different branching schemes. Our analysis shows that the reformulated problem has an unfavorable structure for column generation: It is highly degenerate, slowing down the convergence of column generation, and for many project scheduling problems, it yields the same or only slightly stronger linear relaxations as classical formulations at the expense of large increases in runtime. Our computational experiments complement our theoretical findings. 
\end{abstract}

\begin{keyword}
Project Scheduling \sep Branch-and-Price \sep Column Generation \sep Integer Programming
\end{keyword}

\end{frontmatter}

\section{Introduction}\label{sec::Introduction}
The resource-constrained project scheduling problem (RCPSP) is a well-known NP-hard optimization problem with a wide range of applications. Its classic version aims at scheduling a number of activities subject to resource and precedence constraints such that the makespan is minimized. Over the years, researchers have proposed many extensions, generalizations, and algorithms for the RCPSP and its variants \citep{hartmann2010, hartmann2022, schwindt2015a}.
As recently noted by \cite{bredael2023}, despite decades of research and an active project scheduling community, most research focuses on heuristics, whereas exact approaches capable of solving large instances are lacking. The most popular exact approach for project scheduling consists of formulating the problem as an integer program and solving it using a commercial solver. On the one hand, this approach can be used for a wide range of problem variants and is easy to implement. On the other hand, it quickly becomes intractable for larger problem instances. This behavior is not unique to project scheduling but is common to many optimization problems. As a remedy for many hard problems, such as vehicle routing \citep{costa2019} and bin packing \citep{sadykov2013}, researchers have developed exact \textit{branch-and-price} algorithms capable of solving large problem instances \citep{desrosiers2024}.

Branch-and-price refers to branch-and-bound algorithms designed to solve problems with a large number of columns (variables) that often arise from reformulating a compact formulation via \textit{Dantzig-Wolfe decomposition} \citep{dantzig1960}. The reformulated problem, commonly called \textit{master problem}, is often too large to be solved directly. Therefore, branch-and-price uses \textit{column generation} to solve the linear relaxation at each node in the branching tree. Column generation iteratively solves a \textit{restricted master problem} that contains only a small subset of columns and generates improving columns on demand via a \textit{pricing problem}, which is itself an optimization problem. Branch-and-price is motivated by several benefits \citep{barnhart1998}. First, the linear relaxation of the reformulated problem is often much tighter. Second, the problem may be decomposed such that the resulting pricing problem can be solved efficiently by a problem-specific algorithm.  Third, for problems with block-angular structure, the pricing problem can be decomposed into several independent problems. Furthermore, if the pricing problems are identical, one can break symmetries by aggregating them.

Motivated by these advantages, branch-and-price and other column-generation-based algorithms have also been proposed for project scheduling problems, e.g., by \cite{deckro1991}, \cite{brucker2000}, and \cite{vanEeckhout2020}. However, convincing experimental results demonstrating their superiority compared to directly solving the compact formulation are missing. This leads to the hypothesis that branch-and-price algorithms perform poorly for most project scheduling problems. Until now, no compelling explanation for this hypothesis has been presented. The main contribution of this paper consists of theoretical and numerical results that offer the missing explanation for the ineffectiveness of most column-generation approaches proposed for project scheduling. Specifically, our contribution is four-fold:

\begin{enumerate}
    \item We provide a comprehensive survey of column generation and branch-and-price algorithms for project scheduling. 
    \item We present a problem definition and corresponding Dantzig-Wolfe decomposition that generalizes most formulations considered in the context of column generation and branch-and-price for project scheduling.
    \item We analytically and numerically investigate structural properties of the Dantzig-Wolfe decomposition for project scheduling and their impact on column generation. This analysis provides evidence as to why many column-generation algorithms for project scheduling are ineffective. 
    \item We analyze the strength of the proposed branching strategies in the context of branch-and-price for project scheduling, which provides further support for our hypothesis.
\end{enumerate}

The remainder of this paper is organized as follows. First, we explain the technical details of branch-and-price algorithms necessary for the understanding of this work (Section \ref{sec::BPBasics}). Second, we survey the literature regarding column generation algorithms for project scheduling (Section \ref{sec::LiteratureReview}). Then, we provide a Dantzig-Wolfe decomposition for project scheduling, which generalizes many of the approaches from the literature (Section \ref{sec::DantzigWolfe}). We analyze the Dantzig-Wolfe decomposition by studying the structure of the master (Section \ref{sec::MasterProblem}) and pricing problems (Section \ref{sec::PricingProblems}), including a discussion of why they cause convergence issues and lead to weak lower bounds. Then, we assess branching rules from the literature regarding their impact on the relaxation strength (Section \ref{sec::Branching}). Finally, we provide a synthesis of our results (Section \ref{sec::Synthesis}) before closing with a brief conclusion (Section \ref{sec::Conculsion}). 

\section{Branch-and-Price}\label{sec::BPBasics}
In this section, we explain the fundamentals of Dantzig-Wolfe decomposition and branch-and-price by outlining its two main algorithmic components, column generation and branching.

\subsection{Dantzig-Wolfe Decomposition}
For the illustration of Dantzig-Wolfe decomposition consider the following \textit{compact} integer program (see \cite{desrosiers2024}):
\begin{align}
    z_{IP} = \min ~& c^{T}x \tag{IP} \label{compact}\\
    \text{s.t.}~& Ax \leq a &&  \notag\\
        & Bx \leq b && \notag\\
        & x \in \mathbb{Z}_{\geq 0} \notag
\end{align}
Note that we omit the dimensions of matrices and vectors but assume they are of the appropriate sizes. Furthermore, let us assume that $A$ has a block-angular structure 
$$
A = \begin{bmatrix}
A_{1} &  & &  \\
& A_{2} &  & \\
&  & \ddots & \\
&  &  & A_{N}  \\
\end{bmatrix} \text{and }
a = \begin{bmatrix}
a_{1}   \\
a_{2}  \\
\vdots  \\
 a_{N}  \\
\end{bmatrix} 
$$
with $N$ blocks. Similarly, let $c_n$, $x_n$, and $B_n$ be the cost coefficients, variables, and submatrices of $B$ corresponding to the columns of $A_n$ for $n=1,\ldots, N$.
We may group the constraints of the integer program \ref{compact} into the following (nonempty) subsets:
\begin{align}
    &\mathcal{A}_{n} := \{x_n \in \mathbb{Z}_{\geq 0}: A_{n}x_{n} \leq a_{n}\} &&\forall n=1,\ldots,N\\
    &\mathcal{B} := \{x \in \mathbb{Z}_{\geq 0}: Bx \leq b\}
\end{align}
Assuming that the convex hull $conv(\mathcal{A}_{n})$ of $\mathcal{A}_{n}$ is bounded for all $n = 1, \ldots, N$, we can reformulate the compact \ref{compact} via \textit{discretization} into the following equivalent \textit{extensive formulation} or integer \textit{master problem} (IMP):

\begin{align}
    z_{IMP} =  \min~ & \sum_{n=1}^{N}\sum_{\omega \in \Omega_{n}}(c^{T}_n x_{n \omega})\lambda_{n \omega} \tag{IMP}\label{extensive}\\
    \text{s.t.}~& \sum_{\omega \in \Omega_{n}} \lambda_{n \omega} = 1 && \forall n=1,\ldots,N && [\pi_{n}] \notag\\
        & \sum_{n=1}^{N}(B_{n} x_{n\omega})\lambda_{n \omega} \leq b &&  && [\pi_{b}] \notag\\ 
        & \lambda_{n \omega} \in \mathbb{Z}_{\geq 0}&&\forall n=1,\ldots,N,~ \omega \in \Omega_{n}\notag
        \notag
\end{align}
Where $\lambda_{n \omega}$ is a decision variable indicating whether we select the partial solution $x_{n \omega}$ for the subset of constraints $\mathcal{A}_{n}$ out of the set of all feasible solutions $\omega \in \Omega_{n}$ for $\mathcal{A}_{n}$. Furthermore, let  $\pi_{a}$  and  $\pi_{b}$  be the dual vectors corresponding to the linear relaxation of the \ref{extensive}. A major benefit of this reformulation is that it may yield smaller feasible regions and, thus, tighter bounds. The downside of Dantzig-Wolfe decomposition is that sets $\Omega_{n}$ may become so large that directly solving the master problem is impractical, and one may need to resort to column generation. 

\subsection{Column Generation}
To apply column generation to the master problem, a so-called \textit{restricted master problem} (RMP) is created by relaxing the integrality conditions and limiting the set of variables (columns) to a manageable subset $\lambda_{n \omega}  \in \Tilde{\Omega}_n \subset \Omega_n$. 
Then, the RMP is solved, and its dual values $\pi_{n}$ and $\pi_{b}$ are used to check if a column, which is not considered in the RMP yet, has negative-reduced costs $\hat{c}_{n}(\pi_{n},\pi_{b})$:   
\begin{align}
    \hat{c}_{n}(\pi_{n},\pi_{b}) = -\pi_{n} + \min_{x_n \in \mathcal{A}_n}(c^T_n- \pi_{b}^TB_n)x_n \label{reducedCosts}
\end{align}
Again, explicitly evaluating each potential column may be impractical. Hence, a pricing problem, which is itself an optimization problem, is used to implicitly check if a column with negative reduced costs exists. Due to the block-angular structure, we obtain $N$  independent pricing problems that seek to generate new columns with minimum reduced costs: 

\begin{align}
    \min &~\hat{c}_{n}(\pi_{n},\pi_{b})  \tag{PP}\label{pricing}\notag\\
     \text{s.t.}~& A_n x_n \leq a_n  \notag \\
        & x_n \in \mathbb{Z}_{\geq 0}  \notag
\end{align}
If the pricing problem for some $n \in \mathcal{N}$ returns a solution $x^{\ast}_{n}$ with negative reduced costs, its corresponding column $\lambda^{\ast}_{n \omega}$  is added to the RMP, i.e., $\Tilde{\Omega}_n \gets \Tilde{\Omega}_n \cup \{\omega\}$, and the process is repeated. Otherwise, the RMP's current solution is the optimal solution to the linear relaxation of the master problem, and the algorithm terminates.

\subsection{Branching}\label{sec::BranchingBasics}
To solve the master problem to integrality, branch-and-price algorithms embed column generation into a branching framework. Branch-and-price works like classical branch-and-bound, with the difference that the nodes of the branching tree are solved via column generation. If a node returns a fractional solution, a branching decision is made on how to partition the problem into two or more subproblems that exclude the fractional solution. In column generation, branching decisions can be enforced in multiple ways, belonging to mainly two groups; see \cite{vanderbeck2000} and \cite{desrosiers2024}. The first group enforces branching decisions by branching on the master variables, i.e., by adding branching constraints to the master problem. Adding constraints to the master problem creates new dual values and impacts the computation of reduced costs. Hence, the objective function of the pricing problem must be adjusted accordingly. The second group, known as branching on original variables, removes columns that violate the branching decisions from the master problem and adds branching constraints to the pricing problems to avoid the generation of violating columns. Branching on the original variables has the advantage that branching rules proposed for the original formulation can be used. Furthermore, as no constraints are added to the master problem, no new dual values emerge. However, adding constraints to the pricing problems may break their structure.

\section{Literature Review} \label{sec::LiteratureReview}
In the following, we survey the literature regarding column-generation approaches for project scheduling. First, we discuss the considered problem variants, the proposed Dantzig-Wolfe decompositions, and column generation algorithms before setting our work in context to the literature. 

\subsection{Problem Variants and Formulations}
We group the considered RCPSP variants according to the structure and terminology of the  survey papers by \cite{hartmann2010, hartmann2022}. Some studies combine several problem variants, but for conciseness, we discuss details of the master and pricing problems only within the extent of the variants that drive the structure of their Dantzig-Wolfe decompositions. A summary of the considered RCPSP variants is given in Table \ref{table::LiteratureOverview}.

\begin{table*}[!h]
\renewcommand*{\arraystretch}{0.75}
\footnotesize
\centering
\begin{tabular}{llccccccc}\toprule
\multirow{3}{*}{Publication}     & \multirow{2}{*}{Problem} &   \multirow{2}{*}{Multi-}                   & \multicolumn{2}{c}{Activities}    & \multicolumn{3}{c}{Resources}              \\ \cmidrule(rl){4-5}\cmidrule(rl){6-8}
                                                                                 &                         Variant      & Project & {Pre-} & Multi- & \multirow{2}{*}{Dedicated}                        & Multi- &    Time-Varying                                       \\
                                                                                   &                               &                         &           emptive                   & Mode  &    &                  Skilled    & Capacities                                    \\
\midrule
\cite{deckro1991}                                 & RCMPSP                                         & $\blacksquare$               &                             &       &            &               &                          &                                     \\       
\cite{brucker2000}                              & PRCPSP                                    &                          & $\blacksquare$                         &            &               &                 &                                                  \\
\cite{brucker2003}                              & PRCPSP                                    &                          & $\blacksquare$                         &  \Large$\bullet$          &               &                 &                                                 \\
\cite{drexl2001}                   & RIP                                &                          &                             &       &                           &                 &                                                 \\
\cite{akkan2005}                   & PSP                               &                          &                             &   \Large$\bullet$            &               &                 &                                                  \\
\cite{montoya2014}                & MSPSP                                     &                          &                             &       &                            $\blacksquare$      & $\blacksquare$   &                                             \\
\cite{coughlan2015}                               & MRIP                               &                          &                          &   & $\blacksquare$               &                              & $\blacksquare$       \\
\cite{moukrim2015}                                & PRCPSP                                    &                          & $\blacksquare$                         &            &               &                 &                                                  \\
\cite{volland2017}                              & RCPSP                       &                          &                             &       & \Large$\bullet$          &                            &              \Large$\bullet$ \\
\cite{wang2019}              & RCPSP                     &                          &                             &     &       $\blacksquare$        &               &                       \\
\cite{vanEeckhout2020}                    & MRCPSP                           &                          &                             & \Large$\bullet$    &     \Large$\bullet$       &                             &              \Large$\bullet$    \\         
\bottomrule
\end{tabular}
\caption{RCPSP variants considered in the context of column generation. Both $\blacksquare$ and \begin{Large}$\bullet$\end{Large} indicate the presence of a problem characteristic, whereas $\blacksquare$ indicates that a characteristic is exploited to decompose the problem.}\label{table::LiteratureOverview}
\end{table*}

\subsubsection{Multi-Project}
\cite{deckro1991} propose a Dantzig-Wolfe decomposition for the resource-constrained multi-project scheduling problem (RCMPSP) that exploits its structure by decomposing the problem by projects. The proposed master problem selects a combination of project schedules (columns) out of the set of all potential precedence-feasible project schedules such that their combination is resource-feasible. Because the set of all project schedules is prohibitively large, the schedules are generated dynamically via column generation. The pricing problems, one for each project, are project scheduling problems (PSPs) without resource constraints that seek to generate improving schedules. 

\subsubsection{Generalized Activity Concepts}
\paragraph{Preemptive Scheduling}
In contrast to the classical RCPSP, the preemptive RCPSP (PRCPSP) allows interrupting and restarting activities. \cite{brucker2000, brucker2003}, and \cite{moukrim2015} propose column generation for the \textit{antichain}-based PRCPSP formulation by \cite{mingozzi1998}. An antichain is a set of activities that can be processed simultaneously without violating resources or precedence constraints. All three papers consider a master problem that selects antichains, represented by columns, for different time intervals. Resource constraints are implicitly considered as only one antichain may be active at each point in time. The pricing problems used to generate improving antichains can be interpreted as knapsack problems with side constraints. The antichain formulation allows decomposing the pricing problem by time. Furthermore, for different time intervals, the pricing problems may be identical, allowing to break symmetries via aggregation.  

\paragraph{Multiple Modes}
The multi-mode RCPSP (MRCPSP) assumes that activities can be processed in several modes \citep{talbot1982}, where the modes represent different alternatives for processing activities that differ in resource requirements, duration, and costs. 
\cite{akkan2005} study a time-cost trade-off problem modeled as multi-mode PSP (MPSP) without resource constraints that seeks to find a minimum cost schedule for a given deadline. The authors propose decomposing the set of activities into several subsets, where a partial schedule for a subset of activities represents a column in the master problem. The master problem then selects a cost-optimal schedule by combining partial schedules. The pricing problems are again MPSPs, where each pricing problem corresponds to one subset of activities. Other studies that consider multiple modes in the context of column generation are \cite{brucker2003}, \cite{coughlan2015}, and \cite{vanEeckhout2020}.

\subsubsection{Generalized Resource Constraints}

\paragraph{Dedicated Resources}
A resource is dedicated either because it is unary, meaning it can only process one activity at a time, or because an activity requests exactly one resource. \cite{wang2019} and \cite{coughlan2015} propose to exploit dedicated resources by applying a Dantzig-Wolfe reformulation. Both papers consider a master problem that selects schedules for each dedicated resource subject to precedence constraints. \cite{wang2019} consider an RCPSP with classical and unary resources, assuming that each activity requires multiple resources of which exactly one is unary.  \cite{coughlan2015} consider dedicated resources in the sense that each activity requires exactly one resource. In both cases, the problem can be decomposed such that each dedicated resource has its own pricing problem. Dedicated resources are also considered in the study by \cite{montoya2014}, which we discuss in the following.

\paragraph{Resources with multiple skills}
In the multi-skill RCPSP (MSRCPSP), resources have multiple skills, and activities require certain skills to be processed. While processing an activity may require multiple skills, the MSRCPSP usually assumes that resources are unary, meaning they can only contribute with one skill to one activity at a time. \cite{montoya2014} proposes a formulation of the MSRCPSP that exploits this structure. The master problem selects columns that represent processing patterns, i.e., start time and resources assignments to activities. The pricing problems, which seek to generate improving patterns, can be decomposed by time and activity resulting in min-cost max-flow problems. 

\paragraph{Resource capacities varying with time}
The basic RCPSP assumes constant resource capacities. However, in applications such as the integrated shift and project scheduling problem considered by \cite{volland2017} and \cite{vanEeckhout2020}, where the resource availability depends on the shift schedules of the personnel, the resource capacity may vary over time. Moreover, in this setting, the resource capacity is a decision rather than a parameter. \cite{volland2017} and \cite{vanEeckhout2020} both propose a Dantzig-Wolfe decomposition with a master problem that selects shift schedules for the workers and a project schedule for the activities balancing resource supply and demand. This formulation has two column types, one representing the shifts and the other representing the project schedules. Two pricing problems result, one identifies improving shift schedules and the other identifies improving project schedules. We focus on the project scheduling component of their models. \cite{volland2017} considers the classic single-mode RCPSP, resulting in a PSP pricing problem as considered by \cite{deckro1991}. \cite{vanEeckhout2020} consider the more general multi-mode setting, resulting in an MPSP pricing problem as considered by \cite{akkan2005}.  Another application of resources with varying availability is turnaround scheduling with calendar constraints, as considered by \cite{coughlan2015}. Calendars define cyclic time intervals during which a resource is available or absent, such as human resources that work five consecutive days followed by two days off. \cite{coughlan2015} exploits this structure, decomposing the problem based on calendars such that each segment of consecutive work days has its own pricing problem. Moreover, the pricing problems for recurring segments are identical, allowing for aggregation. Note that \cite{coughlan2015} considers the decomposition by calendars in addition to the previously discussed decomposition by resources.

\subsubsection{Alternative Objectives}

\paragraph{Resource Investment Problems}
Resource investment problems (RIP) assume unlimited resource capacities but seek to minimize the peak (maximum) resource usage for a given project deadline \citep{mohring1984}. \cite{drexl2001} propose computing lower bounds for the RIP using Lagrangian relaxation and Dantzig-Wolfe decomposition. For the latter, \cite{drexl2001} propose a single-project version of the approach by \cite{deckro1991}, resulting in the same PSP pricing problem. \cite{coughlan2015} consider a multi-mode RIP (MRIP) with calendar constraints and dedicated resources. 

\subsection{Column Generation Algorithms for Project Scheduling}

The previously discussed studies propose several column generation algorithms. As depicted in Table \ref{table::CGAlgorithms}, they can be grouped into three categories: lower-bound computations, heuristics, and exact branch-and-price algorithms. 

\begin{table*}[!h]
\renewcommand*{\arraystretch}{0.75}
\begin{tabular}{llccc}
\toprule
\multirow{2}{*}{Publication} & Problem & Lower Bound  & Column Generation & Branch-and- \\
                             & Variant & Computations & Heuristic         &   Price                               \\\midrule 
 \cite{deckro1991}     &   RCMPSP      &              &  \Large$\bullet$                  &                                   \\
  \cite{brucker2000}     &   PRCPSP      &        \Large$\bullet$      &                    &                                   \\
  \cite{drexl2001}     &   RIP      &        \Large$\bullet$      &                    &                                   \\
  \cite{brucker2003}     &   MPRCPSP      &        \Large$\bullet$     &                    &                                   \\
  \cite{akkan2005}     &   MPSP      &             &      \Large$\bullet$               &                                   \\
  \cite{montoya2014}     &   MSPSP      &             &                   &    \Large$\bullet$                                \\
  \cite{coughlan2015}     &   MRIP      &             &                   &         \Large$\bullet$                            \\
  \cite{moukrim2015}     &   PRCPSP      &             &                   &              \Large$\bullet$                       \\
  \cite{volland2017}     &   RCPSP      &             &      \Large$\bullet$               &                                   \\
  \cite{wang2019}     &   RCPSP      &             &      \Large$\bullet$               &                                   \\
  \cite{vanEeckhout2020}     &   MRCPSP      &             &                 &     \Large$\bullet$                                  \\\bottomrule
\end{tabular}
\caption{Column generation algorithms for project scheduling.}\label{table::CGAlgorithms}
\end{table*}

\subsubsection{Lower Bound Computations}
\cite{brucker2000} use column generation to solve the linear relaxation of the antichain-based formulation of the PRCPSP for a given upper bound $T$ on the makespan. If this relaxation turns out to be infeasible, the approach proves that $T$ is a valid lower bound for the RCPSP. To further strengthen this bound, \cite{brucker2000} use constraint propagation techniques to exclude antichains prior to optimization by introducing additional precedence constraints. The subsequent study by \cite{brucker2003} extends this approach to multiple modes. \cite{drexl2001} compute lower bounds for the RIP solving the linear relaxation of a Dantzig-Wolfe reformulation via column generation. 

\subsubsection{Column Generation Heuristics}
\cite{deckro1991}, \cite{akkan2005}, \cite{volland2017}, and \cite{wang2019} propose heuristics known as \textit{price-and-branch}. Price-and-branch refers to generating an initial column pool by a number of pricing iterations followed by solving the RMP to integrality. The latter is usually done by enforcing integrality restrictions on the RMP and feeding it to a commercial solver. This approach often produces good primal solutions but does neither guarantee to find a feasible solution, assuming one exists, nor optimality. However, in contrast to most heuristics, price-and-branch algorithms also provide a dual bound, hence a lower bound, for the original problem. If the RMP is solved to optimality via column generation, it directly yields a dual bound. Otherwise, the relation between Lagrangian relaxation and column generation (they share the same subproblems) can be exploited to compute a dual bound; see \cite{desrosiers2024}, Chapter 6 for technical details.

\subsubsection{Branch-and-Price Algorithms}
\cite{montoya2014}, \cite{moukrim2015}, \cite{coughlan2015}, and \cite{vanEeckhout2020} present full branch-and-price approaches. We outline these approaches by detailing their branching schemes and other algorithmic components. A summary of the algorithmic features is given in Table \ref{table::LiteratureBP}.  

\begin{table*}[!h]
\centering
\footnotesize
\renewcommand*{\arraystretch}{0.75}
\begin{tabular}{llcccl}\toprule
\multirow{3}{*}{Publication}     & \multirow{3}{*}{Problem} & \multicolumn{3}{c}{Branching on}       &\multirow{3}{*}{Other Algorithmic Components}                     \\
\cmidrule(rl){3-5}
                                 &                          & Start  & Resource  & \multirow{2}{*}{Other}&                                                                      \\
                                 &                          & Times & Demands & &                                                                       \\
\midrule
\multirow{2}{*}{\cite{montoya2014}}           & \multirow{2}{*}{MSPSP}                    & \multirow{2}{*}{\Large$\bullet$}           &       \multirow{2}{*}{}           &  \multirow{2}{*}{}      & Primal heuristics, \\
         &                  &         &                 &       &  constraint propagation, and others \\
\multirow{2}{*}{\cite{coughlan2015}}          & \multirow{2}{*}{MRIP}                    & \multirow{2}{*}{\Large$\bullet$}           & \multirow{2}{*}{\Large$\bullet$}                &       & Primal heuristics,                   \\
         &              &          &               &       &  constraint propagation                          \\
\multirow{2}{*}{\cite{moukrim2015}}            &\multirow{2}{*}{PRCPSP}                   &             &                  & \multirow{2}{*}{\Large$\bullet$}     & Primal heuristics,                              \\
           &                 &             &                  &      & constraint 
 propagation                               \\
\cite{vanEeckhout2020} & MRCPSP                  &             & \Large$\bullet$               &       & Search space decomposition                                             \\
\bottomrule
\end{tabular}
\caption{Overview of branch-and-price algorithms for project scheduling.}\label{table::LiteratureBP}
\end{table*}

\paragraph{Branching Schemes} All proposed branching schemes branch on the original variables by adding constraints to the pricing problems; see Section \ref{sec::BranchingBasics}. \cite{montoya2014} propose to branch on the start times of activities. \cite{vanEeckhout2020} propose a branching framework tailored to the integrated shift and project scheduling problem. We focus on the branching rules used to obtain integral project schedules. To this end, \cite{vanEeckhout2020} present three branching rules based on resource demands, resulting in branching constraints similar to classical resource constraints. \cite{coughlan2015} propose a hierarchical branching scheme for the MRIP that first branches on resource demands before branching on start times. Lastly, \cite{moukrim2015} proposes a tailored branching scheme for the PRCPSP and the antichain formulation. As this approach is tailored to a special case, we refrain from discussing its details. 

\paragraph{Primal Heuristics} \cite{montoya2014}, \cite{coughlan2015}, and \cite{moukrim2015} utilize heuristics to find primal solutions that allow early pruning of suboptimal parts of the branching tree. \cite{moukrim2015} uses a not-specified heuristic to obtain an initial solution to the PRCPSP prior to solving the root node via column generation. Similarly, \cite{coughlan2015} uses a leveling heuristic for the RIP prior to column generation. Additionally, \cite{coughlan2015} use a list scheduling algorithm at each node of the branching tree that considers resource bounds and time windows implied by the current fractional solution. \cite{montoya2014} also use two primal heuristics, which are called at each node of the search tree. First, like \cite{coughlan2015}, they use a list scheduling algorithm. Second, they solve the restricted master problem to integrality. The latter is a common approach known as \textit{restricted master heuristic} \citep{sadykov2019}.

\paragraph{Search Space Reduction} \cite{montoya2014}, \cite{coughlan2015}, \cite{moukrim2015} use constraint propagation techniques to reduce the search space of the considered problems. All three tighten the time windows of activities and propagate the branching decisions regarding start times. Furthermore, \cite{moukrim2015} uses constraint propagation to determine additional precedence relations among activities.

\paragraph{Dual Bound Tightening}
As a complement to good primal bounds obtained via heuristics, tightening the dual bounds may allow early pruning of the branching tree. For this purpose, \cite{coughlan2015} proposes to improve lower bounds by using cutting planes based on the disaggregated precedence constraints proposed by \cite{christofides1987}. The cutting planes are added to the master problem, which creates new dual values that impact the objective function of the pricing problems. Also, \cite{montoya2014} seek to improve their lower bounds. However, instead of tightening the current relaxation, they compute alternative lower bounds based on a stable set problem.

\subsection{Positioning Our Work in the Literature}

The literature proposes Dantzig-Wolfe decompositions for several RCPSP variants as listed in Table \ref{table::LiteratureOverview}. Many of the proposed formulations share common characteristics, which have thus far been overlooked. Results that would justify the use of the proposed formulations and column generation algorithms (Table \ref{table::CGAlgorithms} and Table \ref{table::LiteratureBP}) compared to directly solving the compact formulation via a commercial solver are lacking. To this end, except for hardness proofs of the pricing problems in \cite{coughlan2015}, analytical results are missing. Also, numerical results benchmarking against commercial solvers are falling short.
\cite{deckro1991}, \cite{drexl2001}, \cite{akkan2005}, \cite{brucker2000, brucker2003}, and \cite{moukrim2015} do not benchmark against commercial solvers at all. \cite{volland2017}, \cite{wang2019}, and \cite{vanEeckhout2020} only compare upper bounds, not lower bounds and optimality gaps. Similarly, \cite{coughlan2015} only compare the number of solved instances and do not discuss the solution quality of suboptimal instances. Only \cite{montoya2014} considers upper bounds, lower bounds, and optimality gaps for benchmarking. \cite{montoya2014}, \cite{coughlan2015}, and \cite{vanEeckhout2020} claim to outperform commercial solvers. However, as shown in Table \ref{table::LiteratureBP}, they combine column generation with many other algorithmic components, such as constraint programming and problem-specific heuristics. Hence, it remains unclear if column generation or other algorithmic components drive the claimed superior performance.

In the remainder of this paper, we address the aforementioned gaps in the literature. First, we establish the missing connection between the proposed formulations by presenting a new formulation that generalizes most Dantzig-Wolfe decompositions from the literature (Section \ref{sec::DantzigWolfe}). Then, we study the structure of the Dantzig-Wolfe reformulated model and its impact on column generation. To this end, we analyze the master problem (Section \ref{sec::MasterProblem}) and the pricing problems (Section \ref{sec::PricingProblems}). The latter shed light on the relaxation strength of the Dantzig-Wolfe reformulated models. Additionally, we perform numerical experiments that give insights regarding the computational performance of the Dantzig-Wolfe decomposition. Lastly, we analyze two branching rules from the literature (Section \ref{sec::Branching}).

\section{Dantzig-Wolfe Decomposition for Non-Preemptive Scheduling}\label{sec::DantzigWolfe}

In this section, we define a project scheduling problem and corresponding mathematical models that generalize most non-preemptive RCPSP variants considered in the context of column generation. First, we define the problem and provide a compact formulation before presenting a Dantzig-Wolfe decomposition. For the latter, we provide a general description of the master problem and two ways to formulate the pricing problem. Furthermore, along with the problem definition and models, we discuss their special cases and how they relate to the literature. 

\subsection{Problem Definition}
We consider a multi-mode resource-constrained multi-project scheduling problem with a general objective function (MRCMPSP-GO). Formally, we consider a central decision-maker that simultaneously schedules a set of projects $\mathcal{P}$ competing for a shared pool of renewable resources $\mathcal{R}$ over discrete-time horizon $\mathcal{T}:=\{1, \ldots, T \}$. For conciseness, we do not consider non-renewable resources. However, the analytical results given in later sections also apply to non-renewable resources. Each project $i \in \mathcal{P}$ can be represented by an activity-on-node network $\mathcal{G}_{i} := \{\mathcal{V}_i, \mathcal{E}_i\}$, where the node set $\mathcal{V}_{i}$ represents the activities of project $i$, and the edges $\mathcal{E}_{i}$ represent precedence relationships between pairs of activities. Activities can be processed in multiple modes $\mathcal{M}:=\{1, 2,  \ldots, M\}$, where each activity $j \in \mathcal{V}_{i}$ has to be processed in of its modes $m \in \mathcal{M}_{ij} \subseteq \mathcal{M}$ for a duration of $d_{ijm}$ periods requiring $r_{ijmk}$ units of resource $k \in \mathcal{R}$ during each period of processing. Furthermore, an activity $j^\prime \in \mathcal{V}_{i}$ may only start after all its predecessors $j: (j, j^\prime) \in \mathcal{E}_{i}$ are finished and may not be interrupted once started. Starting activity $j$ of project $i$ in mode $m$ at period $t$ incurs a cost of $c_{ijmt}$ units. In contrast to classical RCPSPs, we consider the resource capacities to be decisions rather than parameters. The capacity assigned to resource $k \in \mathcal{R}$ is an integer between $\underline{R}_{k}$ and $\overline{R}_{k}$. Assigning capacities to resource $k$ incurs a cost of $c_{k}$ per unit. The objective aims at determining a schedule and resource capacities such that the sum of incurred costs is minimized. For appropriate choices of $c_{ijmt}$ and $c_{k}$, the objective covers many common scheduling objectives, such as minimizing the makespan, weighted tardiness, or resource investment costs.  Moreover, as depicted in Table \eqref{table::SpecialCases}, the MRCMPSP-GO contains most RCPSP variants considered in the context of column generation as special cases.

\begin{table*}[h]
\renewcommand*{\arraystretch}{0.75}
\centering
\begin{tabular}{llcccccc} \toprule
\multicolumn{1}{c}{\multirow{2}{*}{Publication}} & Problem & \multirow{2}{*}{$\vert \mathcal{P} \vert = 1$} & \multirow{2}{*}{$\vert \mathcal{M} \vert = 1$} & \multirow{2}{*}{$R_{k} = \overline{R}_{k}$} & \multirow{2}{*}{$\overline{R}_{k} = \infty$}  &  \\
\multicolumn{1}{c}{}                             & Variant &                        &                        &                                                &                                 \\
\midrule
\cite{deckro1991}                                          & RCMPSP  &                        & \Large$\bullet$                        & \Large$\bullet$                                                 &                                                              \\
\cite{drexl2001}                                            & RIP     & \Large$\bullet$                       & \Large$\bullet$                          &                                                                                  & \Large$\bullet$                               &     \\
\cite{akkan2005}                                            & MPSP    & \Large$\bullet$                        &                        &                                                                                 & \Large$\bullet$                                \\
\cite{coughlan2015}                                         & MRIP    & \Large$\bullet$                        &                        &                                                & \Large$\bullet$                                                                  \\
\cite{volland2017}                                          & RCPSP   & \Large$\bullet$                         & \Large$\bullet$                        &    \Large$\bullet$                                                                               &                        \\
\cite{wang2019}                                            & RCPSP   & \Large$\bullet$                        & \Large$\bullet$                         & \Large$\bullet$                                                &                                                          \\
\cite{vanEeckhout2020}                                 & MRCPSP  & \Large$\bullet$                          &                        &    \Large$\bullet$                                                                                &   \\
\bottomrule
\end{tabular}
\caption{Special cases of the MRCMPSP-GO.} \label{table::SpecialCases}
\end{table*}

\subsection{Compact Formulation}
Compact formulations for multi-project scheduling are single-project approaches because they merge the activity-on-node network of all projects into an artificial super-project $\mathcal{G}:= \{\mathcal{V}, \mathcal{E}\}$, where $\mathcal{V}:= \bigcup_{i \in \mathcal{P}} \mathcal{V}_{i}$ and $\mathcal{E}:= \bigcup_{i \in \mathcal{P}} \mathcal{E}_{i}$. For the super-project, the problem boils down to a single-project scheduling problem, and one can choose among its known formulations. We base our model on the well-known pulse disaggregated discrete-time (PDDT) formulation \citep{artigues2017}. The PDDT uses \textit{pulse variables} $x_{ijmt}$, which takes the value $1$ if activity $j$ of project $i$ is started in mode $m$ in period $t$, and $0$ otherwise. Furthermore, let $R_{k}$ be an integer variable representing the assigned capacity to resource $k$. Using this notation, we formulate the MRCMPSP-GO as follows:

\begin{align}
    z_{PDDT} = &\min \sum_{(i,j) \in \mathcal{V}} \sum_{m \in \mathcal{M}_{ij}}\sum_{t \in \mathcal{T}} c_{ijmt} x_{ijmt}  + \sum_{k \in \mathcal{R}} c_{k}R_{k} \label{MRCMPSP::Obj}\\
    \text{s.t.}~&\sum_{m \in \mathcal{M}_{ij}}\sum_{t \in \mathcal{T}} x_{ijmt} = 1 && \forall (i,j) \in  \mathcal{V} \label{MRCMPSP::AssCons}\\
    & \sum_{m \in \mathcal{M}_{ij}}\sum_{s = t- d_{ijm}+1}^{T}x_{ijms} + \sum_{m \in \mathcal{M}_{ij^\prime}}\sum_{s = 1}^{t} x_{ij^\prime ms} \leq 1  && \forall (i, j, j^\prime) \in \mathcal{E}, t \in \mathcal{T}\label{MRCMPSP::DisPredCons}\\
    &\sum_{(i,j) \in \mathcal{V}}\sum_{m \in \mathcal{M}_{ij}} \sum^{t}_{s = t - d_{ijm}+1} r_{ijmk} x_{ijms} \leq R_{k} && \forall k \in \mathcal{R},~ t \in \mathcal{T}\label{MRCMPSP::ResourceUsage}\\   
    & x_{ijmt} \in \{0,1\} && \forall (i,j)  \in \mathcal{V},~ m \in \mathcal{M}_{ij},~ t \in \mathcal{T} \label{MRCMPSP::XDomains}\\
    & R_{k} \in \{\underline{R}_{k}, \ldots, \overline{R}_{k}\} && \forall k \in \mathcal{R} \label{MRCMPSP::RDomains}
\end{align}
Objective \eqref{MRCMPSP::Obj} minimizes the costs of the schedule. Constraints \eqref{MRCMPSP::AssCons} ensure that each activity is assigned exactly one start time and mode. Constraints \eqref{MRCMPSP::DisPredCons} are disaggregated precedence constraints that ensure that an activity starts only after all its predecessors have been finished. Constraints \eqref{MRCMPSP::ResourceUsage} are resource constraints that  ensure that the resource demands do not exceed the assigned capacities. Finally, Constraints \eqref{MRCMPSP::XDomains} and \eqref{MRCMPSP::RDomains} define the variable domains.

At this point, we want to emphasize that the integer program \eqref{MRCMPSP::Obj}-\eqref{MRCMPSP::RDomains} can be used to solve the RCPSP variants depicted in Table \ref{table::SpecialCases}, but there also exist alternative formulations. For instance, the disaggregated precedence constraints \eqref{MRCMPSP::DisPredCons} can be aggregated into Constraints \eqref{MRCMPSP::AggPredCons}, resulting in a formulation known as pulse discrete-time (PDT) formulation.
\begin{align}
    & \sum_{m \in \mathcal{M}_{ij}}\sum_{t \in \mathcal{T}}(t+d_{ijm})x_{ijmt} \leq \sum_{m \in \mathcal{M}_{ij^\prime}}\sum_{t \in \mathcal{T}}tx_{ij^\prime mt}  && \forall (i, j, j^\prime) \in \mathcal{E} \label{MRCMPSP::AggPredCons}
\end{align}
The PDT requires fewer constraints but results in weaker linear relaxations \citep{artigues2017}. In fact, none of the studies using column generation for project scheduling consider the stronger disaggregated version. An alternative to a discrete-time formulation is a continuous time representation as considered by \cite{akkan2005}, which also gives a weaker formulation than the PDDT.

\subsection{Dantzig-Wolfe Decomposition} 
In the literature, Dantzig-Wolfe decomposition is used to decompose scheduling problems by subsets of activities. To generalize this decomposition, we partition the set of all activities $\mathcal{V}$ into $\mathcal{N}:= \{1,2, \ldots, N\}$ mutually exclusive and collectively exhaustive subsets $\mathcal{V}_{1}, \mathcal{V}_{2}, \ldots, \mathcal{V}_{N}$.
Let $\mathcal{E}_{n}$ be the set of precedence relationships corresponding to the activities in $\mathcal{V}_n$. Furthermore, we define $\Omega_n$ as the set of potential schedules for activities $\mathcal{V}_{n}$. Each schedule $\omega \in \Omega_{n}$ is associated with a number of parameters. The parameters $s_{ijn \omega}$ and $f_{ijn \omega}$ represent the start and finish times for activities $(i,j)  \in \mathcal{V}_{n}$ of partition $n$ using schedule $\omega$, respectively. Similarly, $c_{n \omega}$ denots costs incurred by schedule $\omega$. For the resources usage of schedule $\omega$ and activities $n \in \mathcal{V}_n$,  parameter $r_{knt \omega}$  denotes the demand for resources $k$ in period $t$ and  $\overline{r}_{kn \omega}$ denotes the peak demand for resource $k$.  Lastly, we define the binary variables $\lambda_{n\omega}$ that take the value $1$ if activities $n \in \mathcal{V}_n$ are processed according to schedule $\omega$ and 0 otherwise. Now we apply a Dantzig-Wolfe decomposition on the compact formulation \eqref{MRCMPSP::Obj}-\eqref{MRCMPSP::RDomains} and obtain the following master problem:
\begin{align}
    z_{DW} = &\min \sum_{n \in N} \sum_{\omega \in \Omega_n} c_{n\omega} \lambda_{n\omega} + \sum_{k \in \mathcal{R}} c_{k}R_{k} \label{DW::Obj}\\
    \text{s.t.}~&\sum_{\omega \in \Omega_n} \lambda_{n\omega} = 1 && \forall n \in \mathcal{N} && [\pi_{n}] \label{DW::Convexity}\\
    &\sum_{n \in \mathcal{N}: (i,j) \in \mathcal{V}_n} \sum_{\omega \in \Omega_n} f_{ijn \omega}  \leq \sum_{n \in \mathcal{N}: (i,j^\prime) \in \mathcal{V}_n} \sum_{\omega \in \Omega_n} s_{ij^\prime n \omega}   && \forall (i, j, j^\prime) \in \mathcal{E} \setminus{\bigcup_{n \in \mathcal{N}} \mathcal{E}_n} && [\pi_{ijj^\prime}] \label{DW::Precedence}\\
    &\sum_{n \in \mathcal{N}} \sum_{\omega \in \Omega_n} r_{knt \omega} \lambda_{n\omega} \leq R_{k} && \forall k \in \mathcal{R},~ t \in \mathcal{T} && [\pi_{kt}] \label{DW::ResourceUsage}\\
    &\sum_{n \in \mathcal{N}} \sum_{\omega \in \Omega_n} \overline{r}_{kn \omega} \lambda_{n\omega} \leq R_{k} && \forall k \in \mathcal{R}~ && [\pi_{k}] \label{DW::RBound}\\
    & \lambda_{n\omega} \in \{0,1\} && \forall n \in \mathcal{N}, ~ \omega \in \Omega_n\label{DW::Domains}\\
    & R_{k} \in \{\underline{R}_{k}, \ldots, \overline{R}_{k}\} && \forall k \in \mathcal{R} \label{DW::RDomains}
 \end{align} 

Objective \eqref{DW::Obj} is a straightforward reformulation of the original objective function \eqref{MRCMPSP::Obj}. Constraints \eqref{DW::Convexity} are convexity constraints that ensure that for each subset of activities $\mathcal{V}_{n}$, exactly one schedule is selected. Constraints \eqref{DW::Precedence} are reformulated precedence constraints \eqref{MRCMPSP::DisPredCons} but are limited to pairs of activities that do not belong to the same partition $\mathcal{V}_n$. Constraints \eqref{DW::ResourceUsage} are a reformulation of resource constraints \eqref{MRCMPSP::ResourceUsage}, and Constraints \eqref{DW::RBound} are auxiliary constraints that recover the peak resource usage of a column. Note that Constraints \eqref{DW::RBound} is redundant but allows for simplifications for a certain special case, which we discuss later.  Finally, Constraints \eqref{DW::Domains} and \eqref{DW::RDomains} define the decision variables. 

To solve the master problem via column generation, we restrict the master problem \eqref{MRCMPSP::RDomains}-\eqref{DW::Domains} to a small subset of schedules $\Tilde{\Omega}_n \subseteq \Omega_n$ and relax the integrality restrictions. For the resulting RMP, we denote $\pi^{\eqref{DW::Convexity}}_{n}$, $\pi^{\eqref{DW::Precedence}}_{ijj^\prime}$, $\pi^{\eqref{DW::ResourceUsage}}_{kt}$, and $\pi^{\eqref{DW::RBound}}_{k}$ as the dual values corresponding to the Constraints \eqref{DW::Convexity}, \eqref{DW::Precedence}, \eqref{DW::ResourceUsage}, and \eqref{DW::RBound}, respectively. Using the dual values, we can compute the reduced costs of a schedule $\hat{c}_{n}(\pi^{\eqref{DW::Convexity}}_{n}, \pi^{\eqref{DW::Precedence}}_{ijj^\prime}, \pi^{\eqref{DW::ResourceUsage}}_{kt},  \pi^{\eqref{DW::RBound}}_{k})$ as follows:
\begin{align}
   \hat{c}_{n}(\pi^{\eqref{DW::Convexity}}_{n}, \pi^{\eqref{DW::Precedence}}_{ijj^\prime}, \pi^{\eqref{DW::ResourceUsage}}_{kt}, \pi^{\eqref{DW::RBound}}_{k}) & =  \sum_{(i,j) \in \mathcal{V}_n} \sum_{m \in \mathcal{M}_{ij}}\sum_{t \in \mathcal{T}} (c_{ijmt}-c^{\eqref{DW::Precedence}}_{ijt} - c^{\eqref{DW::ResourceUsage}}_{ijmt}) x_{ijmt}  + 
     \sum_{k \in \mathcal{R}} (c_{k} - \pi^{\eqref{DW::RBound}}_{k})R_{k} - \pi_n \label{reducedCosts},\\
     \text{where }  &  c^{\eqref{DW::Precedence}}_{ijt} = \sum_{(i, j, j ^\prime) \in \mathcal{E} } \sum_{m \in \mathcal{M}_{ij^\prime}}\sum_{t \in \mathcal{T}}  t\pi_{ijj^\prime} -  \sum_{(i, j^\prime, j) \in \mathcal{E}}  \sum_{m \in \mathcal{M}_{ij}}\sum_{t \in \mathcal{T}}  (t+d_{ijm})\pi_{ij^\prime j} \text{ and} \notag\\
    & c^{\eqref{DW::ResourceUsage}}_{ijmt} = \sum_{k \in \mathcal{R}} \sum_{t \in \mathcal{T}}\sum_{m \in \mathcal{M}_{ij}} \sum^{t}_{s = t - d_{ijm}+1} \pi^{\eqref{DW::ResourceUsage}}_{kt} r_{ijmk}. \notag
\end{align}
To find an improving schedule with negative reduced costs, we can formulate pricing problems, one for each $n \in \mathcal{N}$, using the original variables $x_{jmt}$ and $R_{k}$. The pricing problems proposed in the literature fall into two categories: resource-unconstrained and resources-constrained. Most approaches propose the former, resulting in pricing problems without resource constraints that read as follows:
\begin{align}
    \min~& \eqref{reducedCosts}\notag\\
    \text{s.t.}~&\sum_{m \in \mathcal{M}_{ij}}\sum_{t \in \mathcal{T}} x_{ijmt} = 1 && \forall (i,j) \in \mathcal{V}_{n} \label{Pricing::AssCons}\\
    &  \sum_{m \in \mathcal{M}_{ij}}\sum_{s = t- d_{ijm}+1}^{T}x_{ijms} + \sum_{m \in \mathcal{M}_{ij^\prime}}\sum_{s = 1}^{t} x_{ij^\prime ms} \leq 1  && \forall (i, j, j^\prime) \in \mathcal{E}_{n}, t \in \mathcal{T}\label{Pricing::DisPredCons}\\
    & x_{ijmt} \in \{0,1\} && \forall (i,j) \in \mathcal{V}_{n},~ m \in \mathcal{M}_{ij},~ t \in \mathcal{T} \label{Pricing::XDomains}
\end{align}
For dedicated resources, also resource-constrained pricing problems have been proposed:
\begin{align}
   \min~& \eqref{reducedCosts} \notag \\
    \text{s.t.}~&\eqref{Pricing::AssCons}-\eqref{Pricing::XDomains} \notag\\
     & \sum_{(i, j) \in \mathcal{V}_n} \sum_{m \in \mathcal{M}_{ij}} \sum^{t}_{s = t - d_{ijm}+1} r_{ijmk} x_{ijms} \leq R_{k} && \forall k \in \mathcal{R},~ t \in 
    \mathcal{T} \label{Pricing::ResourceUsage}\\
    & R_{k} \in \{\underline{R}_{k}, \ldots, \overline{R}_{k}\} && \forall k \in \mathcal{R} \label{Pricing::RDomains}
\end{align}
Depending on the chosen pricing problem type and the considered RCPSP variant, the master and pricing problems may be simplified, as discussed in the following section.

\subsection{Special Cases and Simplifications}
Table \ref{table::DWLiterature} summarizes the simplified master and pricing problems for the column generation approaches and RCPSP variants from the literature. The simplifications result from the following three special cases.

\paragraph{Fixed Resource Capacities}
Most studies consider the case of classical resources with fixed capacities, i.e., $\underline{R}_k = \overline{R}_k$. For fixed capacities, we can substitute $R_k$ by $\overline{R}_k$ and drop Constraints \eqref{MRCMPSP::RDomains} and \eqref{DW::RBound}. Furthermore, removing \eqref{DW::RBound} from the master problem removes the dual values $\pi^{\eqref{DW::RBound}}_k$ and allows simplifying the reduced costs expression \eqref{reducedCosts}.

\paragraph{Decomposition by Projects} A decomposition of projects represents the special case that the partition $\mathcal{N}$ of activities $\mathcal{V}$ into subsets $\mathcal{V}_{n}$ coincide with the activities  $\mathcal{V}_i$ of project $i$, i.e., $\mathcal{V}_{n} = \mathcal{V}_{i}$ if $i = n$. For the decomposition by projects, the Constraints \eqref{DW::Precedence} and corresponding dual values $\pi^{\eqref{DW::Precedence}}_{ijj^\prime}$ disappear. For the single-project case $\vert \mathcal{P} \vert = 1$, a decomposition by projects results in a single pricing problem.

\paragraph{Decomposition by Dedicated Resources}
For dedicated resources, \cite{coughlan2015} and \cite{wang2019} propose to partition the set of activities $\mathcal{V}$ into subsets $\mathcal{V}_k$ such that all activities $(i,j) \in \mathcal{V}_k$ request resource $k$, i.e., $\mathcal{V}_{n} = \mathcal{V}_{k}$ if $n = k$. Furthermore, in combination with resource-constrained pricing problems, Constraints \eqref{DW::RBound} are the only required resource constraints for the master problem, and Constraints \eqref{DW::ResourceUsage} become redundant. 

\begin{table*}[!h]
\renewcommand*{\arraystretch}{0.75}
\centering
\begin{tabular}{lll} \toprule
Publication                       &                           Master Problem                                 & Pricing Problem                                \\
\midrule
\cite{deckro1991}                                                                            & $\min \eqref{DW::Obj}~\text{s.t.}~ \eqref{DW::Convexity}, \eqref{DW::ResourceUsage}, \eqref{DW::Domains}$                         & $\min \eqref{reducedCosts}~\text{s.t.}~\eqref{Pricing::AssCons}-\eqref{Pricing::XDomains}$ \\
\cite{drexl2001}                                             & $\min \eqref{DW::Obj} ~\text{s.t.}~  \eqref{DW::RDomains},\eqref{DW::Convexity}, \eqref{DW::ResourceUsage}, \eqref{DW::Domains} $& $\min \eqref{reducedCosts}~\text{s.t.}~\eqref{Pricing::AssCons}-\eqref{Pricing::XDomains}$    \\
\cite{akkan2005}                                             & $\min \eqref{DW::Obj} ~ \text{s.t.}~ \eqref{DW::Obj}-\eqref{DW::Precedence}, \eqref{DW::Domains}$   & $\min \eqref{reducedCosts}~\text{s.t.}~\eqref{Pricing::AssCons}-\eqref{Pricing::XDomains}$ \\
\cite{coughlan2015}                                                                                &  $\min \eqref{DW::Obj} ~\text{s.t.}~  \eqref{DW::Convexity},\eqref{DW::Precedence}, \eqref{DW::RBound}-\eqref{DW::RDomains} $& $\min \eqref{reducedCosts}~\text{s.t.}~\eqref{Pricing::AssCons}-\eqref{Pricing::RDomains}   $ \\
\cite{volland2017}                                         &  $\min \eqref{DW::Obj}~\text{s.t.}~ \eqref{DW::Convexity}, \eqref{DW::ResourceUsage}, \eqref{DW::Domains}$   &    $\min \eqref{reducedCosts}~\text{s.t.}~\eqref{Pricing::AssCons}-\eqref{Pricing::XDomains}$ \\
\cite{wang2019}                                            &  $\min \eqref{DW::Obj}~\text{s.t.}~ \eqref{DW::Convexity}, \eqref{DW::ResourceUsage}, \eqref{DW::Domains}$   & $\min \eqref{reducedCosts}~\text{s.t.}~\eqref{Pricing::AssCons}-\eqref{Pricing::ResourceUsage}   $     \\
\cite{vanEeckhout2020}                                   & $\min \eqref{DW::Obj}~\text{s.t.}~ \eqref{DW::Convexity}, \eqref{DW::ResourceUsage}, \eqref{DW::Domains}$   & $\min \eqref{reducedCosts}~\text{s.t.}~\eqref{Pricing::AssCons}-\eqref{Pricing::XDomains}$ \\
\bottomrule
\end{tabular}
\caption{Dantzig-Wolfe Decompositions for RCPSP variants considered in the literature.}\label{table::DWLiterature}
\end{table*}
\section{Master Problem Structure and Convergence} \label{sec::MasterProblem}

The structure of the master problem plays a crucial role in the efficiency of column generation algorithms. On the one hand, its structure may be exploited to design tailored column generation algorithms such as the all-integer column generation algorithms by \cite{ronnberg2014all}. On the other hand, its structure can negatively impact convergence as in the case of  \textit{dual noise} \citep{subramanian2008} and primal degeneracy \citep{vanderbeck2005implementing}. Dual noise describes the phenomenon where certain dual values are magnitudes larger than the coefficients of the original objective, leading to the generation of many columns that do not contribute toward convergence. Primal degeneracy refers to the phenomenon where two or more basic feasible solutions to a linear program correspond to the same solution, i.e., several bases represent the same corner point of the feasible region. Unfortunately, also the master problem \eqref{DW::Obj}-\eqref{DW::Domains} for non-preemptive scheduling suffers from degeneracy. Before discussing the issues primal degeneracy causes, let us formally show that linear relaxations of the proposed master problems for project scheduling problems (see Table \ref{table::DWLiterature}) are indeed prone to degeneracy. 

\begin{proposition}
   The restricted master problem for non-preemptive project scheduling problems is prone to primal degeneracy. \label{proposition::Degeneracy}
\end{proposition}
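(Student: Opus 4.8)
The plan is to work with the linear relaxation of the restricted master problem \eqref{DW::Obj}--\eqref{DW::RDomains} written in standard form, introducing a nonnegative slack for each inequality in \eqref{DW::Precedence}, \eqref{DW::ResourceUsage}, and \eqref{DW::RBound}. I would rely on the standard characterization of degeneracy matching the definition given above: a basic feasible solution of a linear program with $m$ constraints is degenerate exactly when fewer than $m$ of its variables are strictly positive. Indeed, every strictly positive variable must be basic, so having fewer than $m$ positive variables forces at least one basic variable to equal zero, which is precisely the situation in which several bases describe the same corner point. I would therefore prove the proposition by exhibiting a natural basic feasible solution and counting its strictly positive variables.

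First I would construct the solution by selecting, for each partition $n \in \mathcal{N}$, a single schedule $\omega(n) \in \tilde{\Omega}_n$, setting $\lambda_{n\omega(n)} = 1$ and all remaining $\lambda_{n\omega} = 0$; in the variable-capacity case I set each $R_k$ to its smallest feasible value, and in the fixed-capacity case $R_k = \overline{R}_k$ is a constant. The convexity constraints \eqref{DW::Convexity} together with the active bounds $\lambda_{n\omega} = 0$ determine the $\lambda$-vector uniquely, so this point is a vertex of the relaxation, and it contributes exactly $N$ strictly positive $\lambda$-variables.

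Next I would count the remaining positive variables, which are exactly the slacks of the non-binding constraints. The heart of the argument is that in any meaningful schedule the coupling constraints are binding: because the resource demand of an activity is constant over its entire processing interval of $d_{ijm}$ periods, a bottleneck resource attains its peak over a whole plateau of consecutive periods, so \eqref{DW::ResourceUsage} is tight at several pairs $(k,t)$ and the associated slacks vanish; likewise any arc in \eqref{DW::Precedence} for which a successor starts exactly when its predecessor finishes yields a zero slack. Each such binding constraint lowers the number of strictly positive variables below $m$, so the vertex is degenerate. I would close by observing that binding resources and tight precedence chains are exactly the features of every non-trivial---and in particular every near-optimal---schedule, so the restricted master is structurally prone to degeneracy; moreover, under the decomposition by dedicated resources the redundant constraint \eqref{DW::RBound} coincides with the peak-period instance of \eqref{DW::ResourceUsage}, contributing a guaranteed pair of tight, linearly dependent rows for each resource.

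The step I expect to be the main obstacle is making the claim that ``at least one coupling constraint is binding'' airtight while keeping it general. A pure dimension count shows that, absent any tight resource or precedence constraint, the integral vertex is exactly non-degenerate, so the whole conclusion hinges on exhibiting binding coupling constraints in the regime of interest and verifying that each one genuinely removes a positive variable rather than merely trading one zero slack for another. I would handle this by reasoning at the level of a single bottleneck resource (or a single tight precedence arc), whose presence is generic, rather than attempting to establish degeneracy for every conceivable feasible point.
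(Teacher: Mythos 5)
Your proposal is correct and follows essentially the same approach as the paper, whose proof is a bare counting argument: the RMP has $N+C$ rows while an integral solution needs only $N$ positive $\lambda$-variables, leaving up to $C$ basic variables at zero. If anything, your version is more careful than the paper's at the one delicate step -- by tracking slacks explicitly you correctly observe that the vertex is degenerate only when some coupling constraints are actually binding (otherwise the slacks themselves fill the basis), and your genericity argument for binding bottleneck and precedence constraints supplies exactly what the paper's phrase ``may require only $N$ non-zero variables'' silently assumes, which is adequate for the deliberately soft claim ``prone to primal degeneracy.''
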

\begin{proof}
     For all variants, the master problem has $N + C$ rows, where $C$ is a positive integer representing the number of Constraints \eqref{DW::Precedence}-\eqref{DW::RBound}.     Consequently, a solution to the RMP consists of $N + C$ basic variables. However, a solution to the problem may require only $N$ non-zero variables, leaving up to $C$ basic variables with a value of zero. Replacing one of the zero basic variables with a non-basic variable results in a degenerate solution.
\end{proof}

The proof of Proposition \ref{proposition::Degeneracy} shows that degeneracy is caused by a disbalance between the number of non-zero variables in a solution and the number of constraints. In the case of a single-project RCPSP and a decomposition by projects ($N = 1$), a solution may need only one non-zero variable but has $N + \vert \mathcal{R} \vert \times \vert \mathcal{T} \vert $ constraints leading to a large disbalance. While this case is unlikely, the large number of variables (columns) can cause massive degeneracy even for solutions having only one non-zero variable less than constraints. In this case, we have one variable with a value of zero in the basis, which can be replaced by one of the many non-basic variables, each resulting in a degenerate solution.

Degeneracy causes three main problems in the context of branch-and-price. First, as the RMP is usually solved by the simplex algorithm, it causes degenerate pivots, slowing down its solving process. Moreover, we may repeat degenerate pivots in subsequent iterations because the RMP is solved in each column generation iteration. Second, it can lead to degenerate branching decisions in the branching process. Third, it leads to unreliable dual values that can misguide the pricing problems for the identification of improving columns. This can be explained from a dual point of view. For the dual of the RMP, column generation can be considered a cutting-plane algorithm, where adding a cut corresponds to adding a column to the primal problem. However, primal degeneracy implies the existence of multiple optimal solutions for the dual, and adding a new cut (column) may cut off only the current dual solution without improving the objective. In the worst case, the multiple dual solutions must be cut off individually. Hence, solving degenerate RMPs may require a large number of iterations and the generation of many columns. 

This behavior was empirically observed for project scheduling by \cite{volland2017} and \cite{drexl2001}. In the latter study, roughly half of the test instances have been excluded from the experiments because they required the generation of too many columns. We can also observe this behavior solving classical RCPSP instances from the PSPLIB \citep{kolisch1997}. Figure \ref{fig::Convergence} depicts the convergence (logarithmic axes) for solving the j30\_29\_4 instance with 30 activities. At each iteration, the most negative reduced cost column is added. After less than 20 iterations, we can observe the impact of primal degeneracy, where adding new columns barely improves the primal bound, and the progress stalls. In total, 1,765 columns are needed for convergence. This means we solve the RMP and pricing problem 1,765 times and generate as many columns just to solve the linear relaxation of the master problem, for an instance with only 30 activities.

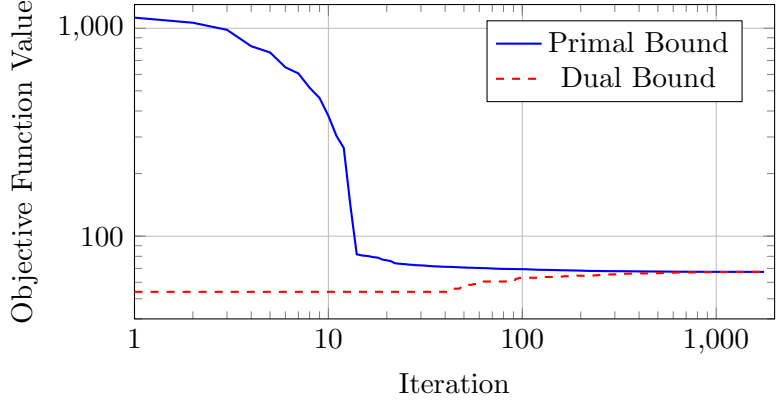
\begin{figure}[!h]
\centering
\begin{tikzpicture}
\begin{axis}[
    width=10cm, height=5.75cm,
    xlabel={Iteration},
    ylabel={Objective Function Value},
    xmode=log, 
    ymode=log, 
    log ticks with fixed point, 
    xmin=1, xmax=2000,
    ymin=40, ymax=1300,
    legend style={at={(0.95,0.95)}, anchor=north east},
    grid=major,
    title={}
]

\addplot[blue, solid, thick] table[x=Iter, y=UB, col sep=semicolon] {data.csv};
\addlegendentry{Primal Bound}

\addplot[red, dashed, thick] table[x=Iter, y=LB, col sep=semicolon] {data.csv};
\addlegendentry{Dual Bound}
\end{axis}
\end{tikzpicture}
\caption{Convergence for solving the linear relaxation of the master problem via column generation (logarithmic axes).}
\label{fig::Convergence}
\end{figure}

Degeneracy is not unique to project scheduling but is a common problem in column generation. Several mitigation strategies have been proposed in the literature. Most of these strategies fall under the umbrella of \textit{dual stabilization} and seek to manipulate the dual values to increase the likelihood of generating columns that contribute towards convergence \citep{pessoa2018}. Surprisingly, in the scope of project scheduling, dual stabilization is only used by \cite{wang2019}.
\section{Pricing Problem Structure and Lower Bounds}\label{sec::PricingProblems}
Empirically, it has been shown that Dantzig-Wolfe decomposition yields substantially stronger lower bounds for many problems, such as the vehicle routing problem \citep{costa2019}. These observations are theoretically grounded by Theorem \ref{theorem:LowerBounds}, which states that the lower bound of the reformulated problem is least as good as the bound from the original formulation. 
\begin{theorem}[\cite{desrosiers2024}, Chapter 4.1]\label{theorem:LowerBounds}
    Given the optimal solutions $z_{IP},~ z_{LP},~ z_{IMP},~ z_{LMP}$ for a compact integer program \ref{compact}, its linear relaxation LP, its integer master problem \ref{extensive}, and the linear relaxation of the master problem LMP, respectively. Then, $
        z_{LP} \leq z_{LMP} \leq  z_{IMP}= z_{IP}$.
\end{theorem}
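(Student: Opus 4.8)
The plan is to prove the chain $z_{LP} \leq z_{LMP} \leq z_{IMP} = z_{IP}$ by establishing its three links separately, working from the right so that the two routine links frame the one that carries the content. Throughout I would work with the generic compact program \ref{compact} and its master \ref{extensive}, using the discretization convention that $\Omega_n$ enumerates exactly the integer points of $\mathcal{A}_n$ (finite since $conv(\mathcal{A}_n)$ is assumed bounded).

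First I would dispose of the equality $z_{IMP} = z_{IP}$, which follows from the discretization. Any integer-feasible master solution satisfies $\sum_{\omega \in \Omega_n}\lambda_{n\omega}=1$ with $\lambda_{n\omega}\in\mathbb{Z}_{\geq 0}$, so exactly one $\lambda_{n\omega}$ equals $1$ in each block $n$. I would turn this into an explicit bijection: selecting the active $\omega$ per block recovers a point $x$ with $x_n = x_{n\omega}\in\mathcal{A}_n$, the coupling rows $\sum_n (B_n x_{n\omega})\lambda_{n\omega}\leq b$ collapse to $Bx\leq b$, and the objective $\sum_{n,\omega}(c_n^{T} x_{n\omega})\lambda_{n\omega}$ collapses to $c^{T}x$. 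Conversely, any \ref{compact}-feasible $x$ yields a feasible integer master solution by setting $\lambda_{n\omega}=1$ for the $\omega$ representing $x_n$. Both maps preserve feasibility and objective value, so the two optima coincide.

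Second, the inequality $z_{LMP}\leq z_{IMP}$ is immediate: LMP is obtained from \ref{extensive} by relaxing $\lambda_{n\omega}\in\mathbb{Z}_{\geq0}$ to $\lambda_{n\omega}\geq0$, which only enlarges the feasible region of a minimization problem.

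The crux is the remaining link $z_{LP}\leq z_{LMP}$, which I would prove by projecting any LMP-feasible solution into the feasible region of LP at equal cost. Given LMP-feasible multipliers $\lambda$, define $x_n := \sum_{\omega\in\Omega_n} x_{n\omega}\lambda_{n\omega}$; since $\lambda\geq0$ and $\sum_{\omega}\lambda_{n\omega}=1$, each $x_n$ is a convex combination of integer points of $\mathcal{A}_n$, hence $x_n\in conv(\mathcal{A}_n)$. The key containment $conv(\mathcal{A}_n)\subseteq\{x_n\geq0:A_n x_n\leq a_n\}$ then gives $A_n x_n\leq a_n$ (the right-hand set is convex and contains $\mathcal{A}_n$), so $Ax\leq a$; linearity of the coupling rows likewise yields $Bx\leq b$, and $c^{T}x=\sum_{n,\omega}(c_n^{T}x_{n\omega})\lambda_{n\omega}$ shows the cost is preserved. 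Hence every LMP point maps to an LP-feasible point of equal objective, so the LP minimum cannot exceed the LMP minimum. This containment is precisely where a strict gap $z_{LP}<z_{LMP}$ can arise, since $conv(\mathcal{A}_n)$ may be a proper subset of the LP polyhedron; I expect verifying that the projected $x$ satisfies \emph{all} original rows (not merely the block rows $A_n x_n \leq a_n$) to be the step requiring the most care.
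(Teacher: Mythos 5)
Your proof is correct and follows essentially the same route as the paper's: the paper proves the chain by the single feasible-region comparison $\text{LP} \supseteq conv(\mathcal{A}) \cap \{x : Bx \leq b\} = \text{LMP} \supseteq \mathcal{A} \cap \mathcal{B}$ (the latter set being the common feasible region of IP and IMP), and your three links are exactly these containments made explicit — the convex-combination projection $x_n = \sum_{\omega} x_{n\omega}\lambda_{n\omega} \in conv(\mathcal{A}_n)$ is the first containment, and your discretization bijection is the identification of IMP with IP. Your version is simply more detailed (and, incidentally, handles the block structure more carefully than the paper's shorthand $\mathcal{A} = \bigcup_{n} \mathcal{A}_n$), but no new idea is involved.
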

\begin{proof}
Using the notation from Section \ref{sec::BPBasics}, we can prove this result by comparing the feasible regions of the different formulations:
    \begin{align}
        &\underbrace{\{x \in \mathbb{R}_{\geq 0}: Ax \leq a\} \cap \{x \in \mathbb{R}_{\geq 0}: Bx \leq b\}}_{\text{LP}}\supseteq \underbrace{conv(\mathcal{A}) \cap \{x \in \mathbb{R}_{\geq 0}: Bx \leq b\}}_{\text{LMP }} \supseteq  \underbrace{\mathcal{A} \cap\mathcal{B}}_{\text{IP and IMP}}, \notag\\
        &\text{where }\mathcal{A} = \bigcup_{n=1}^{N} \mathcal{A}_n. \notag
    \end{align}
\end{proof}

Theorem \ref{theorem:LowerBounds} shows that Dantzig-Wolfe decomposition can lead to smaller feasible regions and improved lower bounds. However, the lower bounds do not improve if the pricing problems possess a structure known as \textit{integrality property}.

\begin{definition}[\cite{guignard2003lagrangean}, integrality property]
    An integer program whose solutions are in $\mathcal{A}:= \{x \in \mathbb{Z}: Ax \leq a\}$ has the integrality property if and only if the convex hull $conv(\mathcal{A}) = \{x\in \mathbb{R}: Ax \leq a\}$.\label{definition::Integrality property}
\end{definition}
\begin{theorem}
    Let $z_{LP}$ and $z_{LMP}$ be the optimal solutions to the linear relaxation of a compact formulation and master formulation, respectively. Then, $z_{LP}$ is equal $z_{LMP}$ if the pricing problem has the integrality property, i.e., $z_{LP} = z_{LMP}$. \label{theorem::IntegralityProperty}
\end{theorem}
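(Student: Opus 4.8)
The plan is to prove the two bounds coincide by showing their feasible regions are identical; once the polytopes match, equality of the optimal objective values is immediate because both relaxations minimize the same linear objective. The starting point is the chain of inclusions established in the proof of Theorem~\ref{theorem:LowerBounds}: projected onto the original variables, the feasible region of the LP is $\{x \in \mathbb{R}_{\geq 0}: Ax \leq a\} \cap \{x \in \mathbb{R}_{\geq 0}: Bx \leq b\}$, while that of the LMP is $conv(\mathcal{A}) \cap \{x \in \mathbb{R}_{\geq 0}: Bx \leq b\}$. Since $conv(\mathcal{A}) \subseteq \{x \in \mathbb{R}_{\geq 0}: Ax \leq a\}$ always holds, giving $z_{LP} \leq z_{LMP}$, it suffices to prove the reverse inclusion $\{x \in \mathbb{R}_{\geq 0}: Ax \leq a\} \subseteq conv(\mathcal{A})$, which collapses the two regions onto each other.

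Next I would invoke the hypothesis directly. By Definition~\ref{definition::Integrality property}, the pricing problem having the integrality property means exactly that $conv(\mathcal{A}) = \{x \in \mathbb{R}_{\geq 0}: Ax \leq a\}$, i.e. the integer hull of the pricing constraints equals their linear relaxation polytope. Substituting this identity into the LMP region turns the inclusion from Theorem~\ref{theorem:LowerBounds} into the equality $conv(\mathcal{A}) \cap \{x \geq 0: Bx \leq b\} = \{x \geq 0: Ax \leq a\} \cap \{x \geq 0: Bx \leq b\}$, so the LP and LMP share the same feasible polytope. Because the discretized objective $\sum_{n,\omega}(c_n^{T} x_{n\omega})\lambda_{n\omega}$ reduces to $c^{T} x$ under the aggregation $x_n = \sum_{\omega} x_{n\omega}\lambda_{n\omega}$, both relaxations minimize the identical objective over this common region, and therefore $z_{LP} = z_{LMP}$.

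The one point that warrants care --- and the main obstacle --- is the precise reading of ``the pricing problem has the integrality property'' in the block-angular setting. The decomposition yields $N$ independent pricing problems with constraint sets $\mathcal{A}_n$, so the natural hypothesis is that each block is integral, $conv(\mathcal{A}_n) = \{x_n \in \mathbb{R}_{\geq 0}: A_n x_n \leq a_n\}$. To recover the global identity used above one must observe that the block structure makes $\mathcal{A}$ a Cartesian product of the $\mathcal{A}_n$, and that the convex hull commutes with Cartesian products; the per-block integrality property then multiplies out to $conv(\mathcal{A}) = \{x \geq 0 : Ax \leq a\}$. Verifying this commutation --- rather than (incorrectly) applying Definition~\ref{definition::Integrality property} to the full system $Ax \leq a$ as if integrality of the whole were assumed --- is the only step that is not purely formal.
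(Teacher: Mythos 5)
Your proposal is correct and follows essentially the same route as the paper's proof: compare the feasible regions of LP and LMP and use Definition~\ref{definition::Integrality property} to turn the inclusion $conv(\mathcal{A}) \subseteq \{x \in \mathbb{R}_{\geq 0}: Ax \leq a\}$ into an equality, from which $z_{LP} = z_{LMP}$ is immediate. Your closing observation about per-block integrality is a welcome refinement rather than a departure --- the paper's terse proof writes $\mathcal{A} = \bigcup_{n=1}^{N}\mathcal{A}_n$ and leaves implicit exactly the step you make precise, namely that in the block-angular setting the correct aggregation is a Cartesian product and that the convex hull commutes with it.
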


\begin{proof}
As done for Theorem \ref{theorem:LowerBounds}, we can prove this result by comparing the feasible regions of the different formulations: 
    \begin{align}
        &\underbrace{\{x \in \mathbb{R}_{\geq 0}: Ax \leq a\} \cap \{x \in \mathbb{R}_{\geq 0}: Bx \leq b\}}_{\text{LP}} \overset{\text{ Def. \ref{definition::Integrality property}}}{=} \underbrace{conv(\mathcal{A}) \cap \{x \in \mathbb{R}_{\geq 0}: Bx \leq b\}}_{\text{LMP }}, \text{where }\mathcal{A} = \bigcup_{n=1}^{N} \mathcal{A}_n. \notag
    \end{align}
\end{proof}

Theorem \ref{theorem::IntegralityProperty} was first proven by \cite{geoffrion1974} in the context of Lagrangian relaxation. Because Lagrangian relaxation and Dantzig-Wolfe decomposition share the same subproblems, the result applies to both. Moreover, both approaches yield the same lower bounds in general; see \cite{desrosiers2024}, Chapter 6 for technical details. In light of Theorem \ref{theorem::IntegralityProperty}, we analyze the structure of the different pricing problems proposed in the literature and discuss their impact on the quality of the lower bounds.

\subsection{Single-Mode without Resource Constraints}
\cite{deckro1991}, \cite{drexl2001}, and \cite{volland2017} consider single-mode PSPs without resource constraints for pricing. The polytope of the PSP has already been studied extensively decades ago, leading to the following result.
\begin{proposition}
    For $\vert \mathcal{M} \vert = 1$, the problem $\min \{\eqref{reducedCosts}: \eqref{Pricing::AssCons}-\eqref{Pricing::XDomains}\}$ possesses the integrality property. \label{proposition::PSP}
\end{proposition}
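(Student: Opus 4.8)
The plan is to show that the constraint matrix of the pricing problem defined by Constraints \eqref{Pricing::AssCons}--\eqref{Pricing::XDomains} for the single-mode case is totally unimodular (or, equivalently, that its LP relaxation has integral vertices), which by Definition \ref{definition::Integrality property} establishes the integrality property. Since $\vert \mathcal{M}\vert = 1$, the mode index $m$ can be dropped, so the variables reduce to $x_{ijt}$ indicating whether activity $(i,j)$ starts in period $t$. The feasible region is then described by the assignment Constraints \eqref{Pricing::AssCons}, which force exactly one start period per activity, and the disaggregated precedence Constraints \eqref{Pricing::DisPredCons}, which link start variables of precedence-related activity pairs.

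First I would recall that the polytope of the single-mode project scheduling problem without resource constraints has been characterized in classical work. The key structural fact is that, after the standard change of variables from pulse (start) variables to step (completion-indicator) variables $y_{ijt} = \sum_{s \le t} x_{ijs}$, the precedence constraints become simple difference inequalities of the form $y_{ij^\prime t} \le y_{ij(t-d_{ij})}$, and the assignment constraints become $y_{ijT} = 1$ together with monotonicity $y_{ijt} \ge y_{ij(t-1)}$. In this transformed space the constraint matrix is an interval matrix whose rows have the consecutive-ones property, which is a sufficient condition for total unimodularity. I would therefore carry out this transformation explicitly and verify that each constraint, read in the step variables, contributes a row with entries in $\{-1,0,+1\}$ in which the nonzeros occur consecutively.

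With total unimodularity of the transformed matrix in hand, I would invoke the theorem of Hoffman and Kruskal: a polyhedron $\{x : Ax \le a,\ x \ge 0\}$ with $A$ totally unimodular and integral $a$ has only integral vertices. Because the right-hand sides here (the $1$'s in the assignment and precedence constraints) are integral, every vertex of the LP relaxation is integral, so the LP relaxation equals the convex hull of the integer points, which is exactly the statement $conv(\mathcal{A}_n) = \{x \in \mathbb{R} : A_n x \le a_n\}$ required by Definition \ref{definition::Integrality property}. This yields the integrality property and hence the proposition.

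The main obstacle I anticipate is the bookkeeping in the disaggregated precedence constraints \eqref{Pricing::DisPredCons}, which in the pulse formulation mix forward sums over the predecessor's start variables with backward sums over the successor's; it is not immediately obvious from this form that the matrix is totally unimodular. The cleanest route is to pass to the step-variable formulation, where the consecutive-ones (interval-matrix) structure is transparent, rather than attempting to verify total unimodularity directly in the pulse variables via, say, the Ghouila-Houri criterion on arbitrary row subsets. An alternative, and perhaps more self-contained, line of argument would be to appeal directly to the known result from the literature that the single-mode PSP without resource constraints is solvable as a dual of a project-network longest-path / flow problem, whose constraint matrix is a network (node-arc incidence) matrix and therefore automatically totally unimodular; citing this established polyhedral characterization would let me avoid re-deriving unimodularity from scratch while still landing on the integrality property.
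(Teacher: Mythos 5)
The paper itself offers no proof of Proposition \ref{proposition::PSP}: it explicitly defers to the literature, citing \cite{mohring2001} for an overview of the known proof strategies. Your proposal reconstructs exactly the classical argument that survey covers --- pass from pulse variables $x_{ijt}$ to step variables $y_{ijt}=\sum_{s\le t}x_{ijs}$, observe that the system becomes totally unimodular, and invoke Hoffman--Kruskal --- so in substance you are supplying the standard proof the paper outsources, and the overall plan is sound. Two technical corrections are needed, though, both fixable. First, your TU certificate is mislabeled: after the transformation the precedence constraints \eqref{Pricing::DisPredCons} read $y_{ijT}-y_{ij,t-d_{ij}}+y_{ij^\prime t}\le 1$, and only after eliminating $y_{ijT}$ via the equality $y_{ijT}=1$ coming from \eqref{Pricing::AssCons} do they become the difference constraints $y_{ij^\prime t}\le y_{ij,t-d_{ij}}$; the resulting matrix, together with the monotonicity rows $y_{ij,t-1}-y_{ijt}\le 0$, has exactly one $+1$ and one $-1$ per row, i.e., it is a network (digraph incidence) matrix --- \emph{not} an interval matrix with the consecutive-ones property, since that class consists of $0/1$ matrices, and ``entries in $\{-1,0,+1\}$ with consecutive nonzeros'' is not a sufficient condition for total unimodularity. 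Your own alternative route (dual of a longest-path/min-cut network problem) names the correct certificate, so you should lead with it. Second, Hoffman--Kruskal gives you integral vertices in the $y$-space, whereas Definition \ref{definition::Integrality property} concerns the pulse polytope; you must add the (easy) observation that the change of variables is a unimodular affine bijection with integral inverse $x_{ijt}=y_{ijt}-y_{ij,t-1}$, so vertices map to vertices and integrality transfers back, yielding $conv(\mathcal{A}_n)=\{x\in\mathbb{R}_{\ge 0}: A_n x\le a_n\}$ as required. With these repairs your argument is a complete, self-contained proof, which is arguably more informative than the paper's citation-only treatment.
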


Proposition \ref{proposition::PSP}  has been proven in many ways, and we refer to \cite{mohring2001} for an overview of the different proof strategies. This result implies that to solve the pricing problem, it suffices to solve its linear relaxation. Moreover, as considered by \cite{drexl2001}, we can solve the pricing problem in pseudo-polynomial time by a combinatorial algorithm transforming the problem into a min-cut problem \citep{mohring2003}. What sounds like an advantage at first also implies, by Theorem \ref{theorem::IntegralityProperty}, that the lower bound is not strengthened via Dantzig-Wolfe decomposition. 

\cite{deckro1991}, \cite{drexl2001}, and \cite{volland2017} do not exactly consider $\min \{\eqref{reducedCosts}: \eqref{Pricing::AssCons}-\eqref{Pricing::XDomains}\}$ for pricing but its weaker PDT formulation based on aggregated precedence constraints, which does not possess the integrality property \citep{mohring2001}. Hence, they may improve their bounds via Dantzig-Wolfe decomposition. However, as the convex hull of the pricing problem with aggregated precedence constraints coincides with the feasible region of the linear relaxation of the stronger formulation with disaggregated precedence constraints, the lower bounds will only be as strong as the bounds from the compact PDDT formulation. Meaning the lower bound improvement they gain from reformulating the model and solving it via column generation can also be achieved by adding only the violated disaggregated precedence constraints \eqref{MRCMPSP::DisPredCons} as cuts. Moreover, as Constraints \eqref{MRCMPSP::DisPredCons} are essentially cover cuts, they are likely to be generated by commercial solvers automatically. 

To assess the practical performance of the different formulations, we compare them for the classical RCPSP with makespan objective solving 480 j30-instances from the PSPLIB \citep{kolisch1997}. To this end, we implement the models in Python and solve them using Gurobi 11.0.2. with standard settings on a machine that has an Intel(R) Xeon(R) W-1390P, 3.50GHz CPU with 8 cores. 

\begin{table}[!h]
\renewcommand*{\arraystretch}{0.75}
\centering
\begin{tabular}{lrrrrrr}\toprule
\multicolumn{1}{c}{\multirow{2}{*}{Formulation}} & \multicolumn{3}{c}{Improvement [\%]}                & \multicolumn{3}{c}{Runtime {[}sec{]}} \\ \cmidrule(rl){2-4} \cmidrule(rl){5-7}
                         & Min.   & Avg. (Std.)   & Max. & Min. & Avg. (Std.)          & Max.         \\
\midrule
LMP         & \multicolumn{1}{c}{-}                    & \multicolumn{1}{c}{-}   & \multicolumn{1}{c}{-}                                      & 0.33                  & 29.52 (74.06)      & 602.63      \\ 
PDDT-LP        & 0.00                     & 0.00 (0.00)  & 0.00                                       & 0.06               & 0.38 (0.56)      & 4.41           \\
PDDT-LP-Cut                                      & 0.00 & 6.81 (9.47)   & 59.15  &  0.86 & 4.95 (6.88)       & 68.67       \\
PDT-LP                                           & -10.14                        & -7.50 (-1.23)  & 0.00  & $<$0.01                     &  0.03 (0.03)       & 0.17             \\
PDT-LP-Cut                                      & 0.00                    & 4.98 (6.68)   & 40.64  & 0.02                   & 1.49 (3.13)        & 21.00          \\
 \bottomrule
\end{tabular}
\caption{Lower bound comparison for 480 RCPSP instances from the PSPLIB.}\label{table::LowerBoundsRCPSP}
\end{table}

Table \ref{table::LowerBoundsRCPSP} reports the lower bounds obtained from the linear relaxations of the different formulations, as well as the required runtimes. The lower bounds are given for each compact formulation relative to the lower bound obtained from the master problem, where LMP, PDDT-LP, and PDT-LP refer to the master problem $\min\{\eqref{DW::Obj}: \eqref{DW::Convexity}, \eqref{DW::ResourceUsage}, \eqref{DW::Domains}\}$, the compact disaggregated PDDT formulation, and the compact aggregated PDT formulation, respectively. Further, the suffix ``-Cut" indicates that we also use cutting planes, which are added automatically by the solver. We validate the reported results by comparing each pair of formulations using a two-sided Wilcoxon rank test. For all pairs, except the pair LMP and PDDT-LP, the lower bound improvements are significant for a significance level of 1\%.  Similarly, except for the pair LMP and PDDT-LP-Cut, the differences in runtimes are significant for a significance level of 1\%.

In line with the theoretical results given in Theorem \ref{theorem::IntegralityProperty} and Proposition \ref{proposition::PSP}, the LMP and PDDT-LP yield the same lower bounds. However, solving the LMP via column generation is, on average, 77 times slower. Adding cutting planes to the PDDT-LP yields significant improvements, and the PDDT-LP-Cut yields lower bounds that are, on average, 6.81\% stronger than the LMP. While the average runtime for the PDDT-LP-Cut is smaller than for the LMP, their differences are not significant because of large variances in runtime. The PDT-LP is, on average, 7.50\% weaker than the LMP and PDDT-LP concerning the lower bounds, which aligns with theoretical results by \cite{artigues2017} showing that the PDT-LP yields weaker relaxations than the PDDT-LP. Furthermore, because the PDT-LP is the most compact formulation out of the three, solving the PDT-LP is 984 and 12.7 times faster than the LMP and PDDT-LP, respectively. Again, we can improve over the LMP by considering cutting planes. The PDT-LP-Cut achieves average improvements of 4.98\% over the LMP while being 19.8 times faster to solve. Moreover, as shown by the results in Table \ref{table::MIPGaps}, the commercial solver can solve the integral PDT to optimality or near optimality in the same or less time it takes to solve the LMP via column generation. 

\begin{table}[!h]
\renewcommand*{\arraystretch}{0.75}
\centering
\begin{tabular}{llllll}\toprule
Optimality Gap & $\leq 0\%$ & $\leq 5\%$ & $\leq 10\%$ & $\leq 15\%$ & $\leq 20\%$ \\
\midrule
Instances      & 93.33\%        & 96.04\%         & 98.13\%          & 99.38\%          & 100.00\%        
\\ \bottomrule
\end{tabular}
\caption{Solution quality for solving the compact aggregated PDT formulation of 480 PSPLIB instances by a commercial solver with a time limit set to the time required to solve the relaxation of the master problem via column generation.}\label{table::MIPGaps}
\end{table}

We conclude that branch-and-price approaches resulting in single-mode pricing problems without resource constraints are not beneficial in terms of computational performance. Presumably unknowingly, \cite{volland2017} also presents empirical evidence for this claim as, for a given time limit, they observe stronger bounds from the compact formulation of the problem compared to its reformulated version. In this context, \cite{volland2017} point to Theorem \ref{theorem:LowerBounds} and argue that the observed lower bounds are surprising. Hence, we assume there is a missing awareness in the scheduling community of the integrality property (Proposition \ref{proposition::PSP}) and its impact on the lower bound (Theorem \ref{theorem::IntegralityProperty}). This assumption is reinforced by the study of \cite{drexl2001}, which compares lower bounds obtained via Lagrangian relaxation and column generation, which will not only give the same lower bound (Theorem \ref{theorem::IntegralityProperty}) but also yield no improvement beyond the compact disaggregated PDDT formulation. 

\subsection{Multi-Mode without Resource Constraints}
A natural extension of single-mode pricing problems without resource constraints is the multi-mode case. Despite \cite{akkan2005} claiming that this ``is a hard to solve problem in theory'', to the best of our knowledge, such theoretical results are missing. Moreover, it is not obvious whether the integrality property and polynomial time tractability from the single-mode case generalize to the multi-mode setting. Our analysis shows that at least the integrality property does not hold for multiple modes. 
\begin{proposition}
    For $\vert \mathcal{M} \vert \geq 2$, the problem $\min\{\eqref{reducedCosts}: \eqref{Pricing::AssCons}-\eqref{Pricing::XDomains}\}$ does not possess the integrality property. \label{proposition::MPSP}
\end{proposition}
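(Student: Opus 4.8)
The plan is to disprove the integrality property by exhibiting a single small instance whose pricing polytope has a fractional extreme point; by Definition~\ref{definition::Integrality property} this already certifies that the convex hull of the integral pricing solutions is strictly contained in the feasible region of its linear relaxation, which is all the negation requires. Since Proposition~\ref{proposition::PSP} guarantees integrality when $|\mathcal{M}| = 1$, and since dropping the precedence constraints \eqref{Pricing::DisPredCons} leaves only the assignment constraints \eqref{Pricing::AssCons}, whose polytope is a product of simplices and hence integral, any counterexample must couple at least one multi-mode activity with a precedence relation. I would therefore build the minimal such instance: a single project with two activities and a precedence arc $a \to b$ on a short time horizon, where the predecessor $a$ admits two modes of different durations ($d_{a1} \neq d_{a2}$) and the successor $b$ has a single mode.

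For this instance I would write out the assignment constraints \eqref{Pricing::AssCons} and the disaggregated precedence constraints \eqref{Pricing::DisPredCons} explicitly, and then exhibit a fractional point $\bar{x}$ that splits activity $a$ evenly between its two modes. The first step is to verify feasibility of $\bar{x}$: it satisfies each assignment equation because the assigned fractions sum to one, and it satisfies every precedence inequality because the mode-dependent coverage window $s \geq t - d_{am} + 1$ differs across the two modes, so the left-hand side of \eqref{Pricing::DisPredCons} stays at most one even at the period where $b$ is scheduled fractionally early.

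The decisive step is certifying that $\bar{x}$ is a genuine extreme point rather than an interior convex combination of integral schedules. I would do this in whichever of two equivalent ways is cleaner for the chosen numbers. Either I count tight constraints, showing that the active rows among \eqref{Pricing::AssCons}, \eqref{Pricing::DisPredCons}, and the nonnegativity bounds from \eqref{Pricing::XDomains} contain a linearly independent subsystem of full rank, so that $\bar{x}$ is their unique solution; or I choose a cost vector in the reduced-cost objective \eqref{reducedCosts} for which the linear relaxation attains a value strictly below that of every integral schedule. The latter objective-based argument is the most robust, and I would present it, since an integral polytope attains an integral optimum for every linear objective; a strict gap therefore contradicts the equality of feasible regions that underlies Theorem~\ref{theorem::IntegralityProperty}.

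The hard part is the calibration of durations, time windows, and costs so that the fractional mixing of the two modes is strictly cheaper than, or geometrically separated from, every pure assignment while remaining precedence-feasible. This interaction is precisely what is absent in the single-mode setting of Proposition~\ref{proposition::PSP}, where the coverage windows are uniform and the precedence polytope is known to be integral. Pinning down the smallest instance in which the precedence inequalities cut off a fractional vertex, and then certifying vertex-ness rather than mere fractional feasibility, is where the real care lies.
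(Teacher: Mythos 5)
Your overall strategy---refute integrality by exhibiting a concrete instance with a fractional vertex, certified either by a rank argument or by a cost vector with a strict LP--IP gap---is exactly the strategy of the paper, which constructs an explicit counterexample and a cost vector under which the optimal basic solution of the relaxation $\min\{\eqref{reducedCosts}: \eqref{Pricing::AssCons}\text{--}\eqref{Pricing::DisPredCons},\,x\in\mathbb{R}\}$ is fractional. However, your proposed ``minimal such instance'' cannot work, and this is a genuine gap. For a single precedence arc $a\to b$, order the rows of \eqref{Pricing::AssCons}--\eqref{Pricing::DisPredCons} as: the assignment row of $a$, then the precedence rows for $t=1,\ldots,T$, then the assignment row of $b$. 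The variable $x_{amt}$ appears in the precedence row for period $t'$ exactly when $t \geq t' - d_{am}+1$, i.e.\ in rows $t'=1,\ldots,t+d_{am}-1$, a \emph{prefix} of the precedence block, while $x_{bt}$ appears in rows $t'=t,\ldots,T$, a \emph{suffix}. Under the stated row order every column therefore has its ones in consecutive rows---the mode-dependent durations only change where each prefix ends---so the constraint matrix is an interval matrix, hence totally unimodular, and the polytope is integral for \emph{every} $\vert\mathcal{M}\vert$. The same row-ordering argument extends to any chain $a\to b\to c\to\cdots$ (place each assignment row between the two precedence blocks in which that activity participates). Consequently, no calibration of durations, horizons, or costs can produce a fractional vertex in your two-activity family: your feasibility check would succeed, but the decisive vertex-certification step must fail.

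What multi-modality actually breaks is the interaction across \emph{parallel} precedence paths: with two paths from a common predecessor to a common successor, the mode-dependent coverage windows desynchronize the two paths, and the interval structure above is destroyed because an activity's columns must be consecutive in two precedence blocks that cannot both be placed adjacent to its assignment row. This is why the paper's counterexample (\ref{app::CounterExample}) uses five activities with the diamond-shaped network $\mathcal{E}_i=\{(1,2),(1,3),(2,5),(3,4),(4,5)\}$, two modes with distinct durations, a horizon of eight periods, and costs that make a half-half mixture of two schedules---one routed through each path---strictly cheaper than any integral schedule. To repair your proof, replace your instance by one containing at least two activity-disjoint paths between a multi-mode source and a common sink (the paper's instance is a natural candidate), and then your objective-based certification goes through verbatim: the basic optimal solution of the relaxation is fractional with value strictly below every integral schedule, so $conv(\{\eqref{Pricing::AssCons}\text{--}\eqref{Pricing::XDomains}\}) \neq \{\eqref{Pricing::AssCons}\text{--}\eqref{Pricing::DisPredCons},\,x\in\mathbb{R}\}$ as required by Definition~\ref{definition::Integrality property}.
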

We prove this claim by means of a counterexample given in \ref{app::CounterExample}. Proposition \ref{proposition::MPSP} shows that even for the strong disaggregate precedence constraints, the multi-mode variant does not possess the integrality property. This result gives hope that, in contrast to the single-mode case, Dantzig-Wolfe decompositions yield stronger lower bounds than the PDDT formulation. To assess the practical quality of the lower bounds, we solve the classical MRCMPSP for the different formulations using the MMLIB50-instances from the MMLIB \citep{vanpeteghem2014}. 

\begin{table}[!h]
\renewcommand*{\arraystretch}{0.75}
\centering
\begin{tabular}{lrrrrrr}\toprule
\multicolumn{1}{c}{\multirow{2}{*}{Difference}} & \multicolumn{3}{c}{Improvement [\%]}                & \multicolumn{3}{c}{Runtime {[}sec{]}} \\ \cmidrule(rl){2-4} \cmidrule(rl){5-7}
                         & Min.   & Avg. (Std.)   & Max. & Min. & Avg. (Std.)          & Max.         \\
\midrule
LMP                     & \multicolumn{1}{c}{-}                        & \multicolumn{1}{c}{-} & \multicolumn{1}{c}{-}  & 4.26                     & 2,715.75 (7,582.83)       & 63,173.55  \\
PDDT-LP                                           & -4.35                       & -0.42 (0.72)  & 0.00  & 15.85                     & 104.70 (173.86)      & 1,463.48           \\
PDDT-LP-Cut  & -0.35                       & 3.11 (4.24) & 18.26  & 13.39                     & 458.72 (557.55)       & 2,954.22\\

PDT-LP     & -12.43                       & -4.09 (4.07)  & 0.00  & 0.19                   & 0.40 (0.20)       & 1.09       \\
PDT-LP-Cut  & -5.94                        & 1.07 (4.15)  & 31.33  & 1.12                    & 15.82 (19.98)       & 111.19 \\ \bottomrule 
\end{tabular}
\caption{Lower bound improvements over the PDDT formulation for 200 MRCPSP instances from the MMLIB.}\label{table::LowerBoundsMRCPSP}
\end{table}

Preliminary experiments showed that solving the linear relaxations of the master problem via column generation is very time-consuming. Therefore, we speed up the convergence of column generation by considering all solutions, returned by one call of the pricing problem, that have negative reduced costs and by using aggregated precedence constraints in the pricing problem. While this leads to substantial speed-ups, we limit our experiments to 200 randomly sampled instances from the MMLIB for time reasons. 

Table \ref{table::LowerBoundsMRCPSP} reports the lower bounds for the different compact formulations relative to the lower bound obtained from the relaxed master problem, as well as their respective runtimes. The results are obtained using the same computational setup and naming convention as for the single-mode case. For all pairs, except the lower bounds of the pair LMP and PDDT-LP-Cut, the results are significant for a significance level of 1\%.

For the multi-mode case, the LMP yields lower bounds that are only 0.42\% stronger on average than the lower bounds from the PDDT-LP. However, solving the LMP via column generation takes several hours in many cases and is, on average, 26 times slower. Adding cutting planes to the PDDT-LP improves its strength, and the PDDT-LP-Cut yields lower bounds that are not significantly different from the lower bounds of the LMP. However, solving the PDDT-LP-Cut is 5.9 times faster than solving the LMP. As observed for the single-mode case, the PDT-LP is the weakest formulation but the fastest to solve. The PDT-LP yields, on average, 4.09\% weaker lower bounds than the LMP but is 6,789 times faster to solve. Furthermore, we can improve the PDT-LP beyond the LMP by considering cutting planes. The PDT-LP-Cut achieves average improvements of 1.07\% over the LMP while being more than 171 times faster to solve. Because stronger lower bounds in much shorter runtimes are obtained by solving the compact PDT formulation and adding cutting planes, we conclude that column-generation approaches using pricing problems without resource constraints are not computationally beneficial for multi-mode problems.

\subsection{Single- and Multi-Mode with Resource Constraints}

For the special case of dedicated resources as considered by \cite{coughlan2015} and \cite{wang2019}, we can decompose the problem such that the pricing problems are resource-constrained. For resource-constrained pricing problems, it is straightforward to show that they are not integral.

\begin{proposition}
    The pricing problems $\min\{\eqref{reducedCosts}: \eqref{MRCMPSP::RDomains},\eqref{Pricing::AssCons}-\eqref{Pricing::ResourceUsage}\}$ and $\min\{\eqref{reducedCosts}: \eqref{Pricing::AssCons}-\eqref{Pricing::ResourceUsage}\}$  do not possess the integrality property. \label{proposition::RCPSP}
\end{proposition}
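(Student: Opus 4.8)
The plan is to disprove the integrality property directly from Definition~\ref{definition::Integrality property}: it suffices to exhibit a \emph{single} instance of the resource-constrained pricing problem for which $conv(\mathcal{A}) \subsetneq \{x \in \mathbb{R}_{\geq 0} : Ax \leq a\}$, i.e., a fractional extreme point of the LP relaxation of the pricing polytope that is not a convex combination of integer-feasible schedules. The key observation is that both pricing problems in the statement are obtained from the resource-\emph{unconstrained} multi-mode pricing problem $\min\{\eqref{reducedCosts} : \eqref{Pricing::AssCons}-\eqref{Pricing::XDomains}\}$ of Proposition~\ref{proposition::MPSP} by merely appending the resource constraints~\eqref{Pricing::ResourceUsage} (and, for the first variant, the capacity variables and domains~\eqref{MRCMPSP::RDomains}). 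I would exploit exactly this nesting, arguing in the reverse direction to the feasible-region comparisons used for Theorem~\ref{theorem::IntegralityProperty}.

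For the main construction I would reuse the counterexample already furnished for Proposition~\ref{proposition::MPSP} in~\ref{app::CounterExample}, which supplies a fractional vertex $\bar{x}$ of the resource-unconstrained multi-mode pricing polytope. I would then fix the capacities $\overline{R}_k$ large enough (respectively leave $R_k$ free in the variable-capacity variant) that the appended constraints~\eqref{Pricing::ResourceUsage} are \emph{slack} at $\bar{x}$. Three short observations finish the argument: (i) $\bar{x}$ remains feasible for the enlarged system, since the new constraints hold with strict inequality; (ii) adding constraints can only \emph{remove} integer-feasible points, so $\bar{x}$ still lies outside the convex hull of integer solutions; and (iii) because~\eqref{Pricing::ResourceUsage} are non-binding at $\bar{x}$, the inequalities active at $\bar{x}$ are unchanged, so the linearly independent tight constraints that certify $\bar{x}$ as a vertex persist. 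Hence $conv(\mathcal{A}) \subsetneq \{x : Ax \leq a\}$ for both variants, which establishes the proposition as stated, since the pricing template subsumes the multi-mode case.

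The subtle point — and the one I expect to be the main obstacle — is the genuinely \emph{single-mode} reading suggested by the section title and by the unary-resource decomposition of \cite{wang2019}. Here the slack-capacity trick cannot apply: by Proposition~\ref{proposition::PSP} the single-mode problem without resource constraints is already integral, so any single-mode counterexample must have \emph{binding} resource constraints. Constructing a small such instance is delicate, because over a single resource on a linear horizon the induced conflict structure is interval-graph-like and therefore tends to yield integral polytopes; a true fractional vertex only emerges once precedence constraints~\eqref{Pricing::DisPredCons} or several resources break this structure. I would resolve this in one of two ways: either construct an explicit single-machine-with-precedence instance whose disaggregated time-indexed relaxation has a half-integral vertex and verify it is not a convex combination of integer schedules; or, more economically, invoke the complexity of the dedicated-resource pricing problem, since if it possessed the integrality property its LP relaxation would be integral and the problem thus solvable as a linear program, contradicting the NP-hardness of the pricing problems established by \cite{coughlan2015} unless $\mathrm{P}=\mathrm{NP}$.
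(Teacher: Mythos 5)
Your proposal is correct, but your primary route is genuinely different from the paper's. The paper disposes of both pricing problems with a two-line complexity argument (explicitly labeled ``strategy adopted from \cite{coughlan2015}''): for $\overline{R}_k = \infty$ the first pricing problem is itself an RIP, and for $\underline{R}_k = R_k = \overline{R}_k$ with $\vert \mathcal{M} \vert = \vert \mathcal{P} \vert = 1$ the second becomes an RCPSP; both are NP-hard \citep{mohring1984, blazewicz1983}, so neither can possess the integrality property unless $\mathrm{P} = \mathrm{NP}$ --- which is precisely the ``more economical'' fallback you sketch for the single-mode case. Your main construction --- reusing the fractional vertex from \ref{app::CounterExample} and choosing capacities large enough that Constraints \eqref{Pricing::ResourceUsage} are slack --- is a valid alternative for the proposition as literally stated, since the pricing templates are multi-mode, and it buys something the paper's proof does not: an explicit fractional point rather than a statement conditional on $\mathrm{P} \neq \mathrm{NP}$. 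What it does not buy is the content the surrounding section actually needs: with slack capacities the resource constraints are redundant, so non-integrality is merely inherited from the modes, whereas the paper's argument shows that the resource constraints themselves destroy integrality --- crucially including the single-mode case relevant to the dedicated-resource decomposition of \cite{wang2019}, where, as you correctly diagnose via Proposition \ref{proposition::PSP}, any counterexample must have binding resource constraints and your slack trick collapses. One concrete repair to your vertex argument in the variable-capacity variant: you cannot ``leave $R_k$ free,'' because a point at which \eqref{Pricing::ResourceUsage} is slack and $R_k$ lies strictly between the bounds of \eqref{MRCMPSP::RDomains} is not a vertex ($R_k$ can be perturbed in both directions); instead pin $R_k$ at a bound, e.g., take $\underline{R}_k$ large and set $\bar{R}_k = \underline{R}_k$, so that the tight bound constraint supplies the missing linearly independent inequality.
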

\begin{proof} [Proof (strategy adopted from \cite{coughlan2015}):]
    For the RIP, i.e., $ \overline{R}_k = \infty$, the corresponding pricing problem $\min\{\eqref{reducedCosts}: \eqref{MRCMPSP::RDomains}, \eqref{Pricing::AssCons}-\eqref{Pricing::ResourceUsage}\}$ is itself a RIP, which is known to be NP-hard \citep{mohring1984}. Thus, it cannot possess the integrality property unless P = NP. Similarly, for the special case $\underline{R}_k = R_{k} = \overline{R}_k$ and $\vert \mathcal{M} \vert =\vert \mathcal{P} \vert = 1$, the corresponding pricing problem $\min\{\eqref{reducedCosts}: \eqref{Pricing::AssCons}-\eqref{Pricing::ResourceUsage}\}$ becomes an RCPSP, which is also NP-hard \citep{blazewicz1983}. 
\end{proof}

Proposition \ref{proposition::RCPSP} lets us assume that a decomposition considerably tightens the formulation as we get RIP or RCPSP pricing problems. These problems are not only known to be NP-hard but also to have weak relaxations, and hence, we expect to gain much by applying a Dantzig-Wolfe decomposition. The experiments by \cite{coughlan2015} confirm this intuition by showing that substantially stronger lower bounds are observed for RIPs in practice. However, solving the linear relaxation is much harder as we repetitively need to solve NP-hard pricing problems, which, as shown by \cite{coughlan2015}, become the bottleneck of the algorithm. However, they do not state how much time it needs to solve the linear relaxation of the compact and master formulation. Hence, the trade-off between runtime and lower bound strength remains unclear. Despite further problem-specific adaptations of their algorithm and many algorithmic enhancements, the branch-and-price algorithm by \cite{coughlan2015} has difficulties solving instances with 50 activities. Therefore, we assume that even for this special case, the increase in the lower bound is offset by a disproportionate increase in runtime.

\subsection{Other pricing problems}

Our Dantzig-Wolfe decomposition does not cover the MSRCPSP and the PRCPSP as special cases. Conveniently, for these variants, the structure of the pricing problems is well-known. For the MRCPSP, \cite{montoya2014} presents a formulation that has a min-cost flow pricing problem, which is polynomial-time solvable and has the integrality property. Hence, their Dantzig-Wolfe reformulation does not tighten the lower bounds, which explains why they use an alternative lower bound based on a stable set problem in their branch-and-price approach. For the PRCPSP, \cite{brucker2000, brucker2003}, and \cite{moukrim2015} use the alternative formulation by \cite{mingozzi1998}. This formulation can be considered a natural master formulation with pricing problems that are knapsack problems with side constraints. As the knapsack problem is NP-hard, it does not possess the integrality property. Hence, their problem formulation may yield tighter bounds than its compact formulation. While the latter was not formulated, it can be reverse engineered from the master formulation; see \cite{desrosiers2024}, Chapter 4 for technical details.

\subsection{Summary of Lower Bounds}
Table \ref{table::LowerBounds} summarizes the theoretical strength of the lower bounds for the different Dantzig-Wolfe decompositions that follow from Theorem \ref{theorem::IntegralityProperty} and the previous analysis of the pricing problems. The table compares the lower bounds from the linear relaxation of the compact (original) formulation $z_{LP}$ and the master problem $z_{LMP}$, respectively. Thereby, $z_{LP}$ and $z_{LMP}$ refer to the bounds obtained from the formulations given in the literature and not our PDDT formulation \eqref{MRCMPSP::Obj}-\eqref{MRCMPSP::RDomains}. As the bounds from the literature may be weaker, when appropriate, we also compare against the bounds from the linear relaxation of the PDDT formulation denoted by $z_{PDDT-LP}$. The results show that for non-preemptive RCPSP variants resulting in single-mode pricing problems without resource constraints, Dantzig-Wolfe decomposition does not strengthen lower bounds beyond the compact disaggregated PDDT formulation. While the bounds for the decompositions resulting in multi-mode pricing problems without resource constraints may improve, our numerical results show that the improvements are of a rather theoretical nature, as only marginal improvements are observed. For the special variants of dedicated resources and preemptive activities, numerical results from the literature indicate that the master formulations are substantially stronger.
\begin{table}[!h]
\renewcommand*{\arraystretch}{0.75}
\centering
\begin{tabular}{lll} \toprule
Publication                       & Problem Variant&                           Lower Bounds                                                              \\
\midrule
\cite{deckro1991}   & RCPSP                                                                         & $z_{LP} \leq z_{LMP} = z_{PDDT-LP}$                         \\
\cite{brucker2000}       & PRCPSP                                       & $z_{LP} \leq z_{LMP}$ \\
\cite{drexl2001}               & RIP                                & $z_{LP} \leq z_{LMP} = z_{PDDT-LP}$ \\
\cite{brucker2003}       & MPRCPSP                                        & $z_{LP} \leq z_{LMP}$ \\
\cite{akkan2005}           & MPSP                                    & $z_{LP} \leq z_{PDDT-LP}\leq z_{LMP}$   \\
\cite{coughlan2015}    & MRIP                                                                              &  $z_{LP} \leq z_{PDDT-LP}\leq z_{LMP}$\\
\cite{montoya2014}       & MSRCPSP                                        & $z_{LP} = z_{LMP}$ \\
\cite{moukrim2015}        & PRCPSP                                       & $z_{LP} \leq z_{LMP}$ \\
\cite{volland2017}            & RCPSP                               &  $z_{LP} \leq z_{LMP} = z_{PDDT-LP}$    \\
\cite{wang2019}                & RCPSP                              &  $z_{LP} \leq z_{PDDT-LP}\leq z_{LMP}$   \\
\cite{vanEeckhout2020}           & MRCPSP                          & $z_{LP} \leq z_{PDDT-LP}\leq z_{LMP}$   \\
\bottomrule
\end{tabular}
\caption{Theortical lower bound strength.}\label{table::LowerBounds}
\end{table}

\section{Branching and Lower Bounds} \label{sec::Branching}
So far, we have analyzed the lower bounds obtained by solving the linear relaxation of the master problem without the consideration of branching constraints. In other words, we have analyzed the lower bounds at the root node of the branching tree. However, the branching strategies proposed in the literature add additional constraints to the pricing problems, which may break their structure and thus strengthen the relaxations of later nodes in the branching tree. Specifically, we mean strengthening the lower bounds of the master formulation $z^{(\cdot)}_{LMP}$ relative to the compact formulation $z^{(\cdot)}_{LP}$ when the same branching constraints $(\cdot)$ are added to both formulations. In the following, we analyze the impact of branching on start times and resource demands. We do not consider the branching on interval orders as proposed by \cite{moukrim2015} because it is tailored to preemptive scheduling based on the alternative formulation by \cite{mingozzi1998} and  does not directly apply to classical formulations. Furthermore, we consider single-mode PSP pricing problems without resource constraints because the lower bounds for other pricing problems are potentially already tighter at the root node. 

\subsection{Branching on Start Times} \label{subsec::BranchingStartTimes}

Branching on the start times of activities as proposed by \cite{montoya2014} and \cite{coughlan2015} can be formally described as follows. Let $s^{\ast}_{ij}$ be the start time of activity $(i,j) \in \mathcal{V}_{n}$ for a fractional solution. First, we remove all columns $\lambda_{n \omega} \in \Tilde{\Omega}_n$ such that $s_{ijn\omega} > s^{\ast}_{ij}$ or $s_{ijn\omega} \leq s^{\ast}_{ij}$, depending on the branching direction, from the master problem. Then, to avoid regenerating the removed columns, we add the following branching constraints to the pricing problems:
\begingroup
\allowdisplaybreaks
\begin{align}
     &\text{down-branch}: x_{ijmt} = 0~ \forall t \in \mathcal{T}: t > \lfloor s^{\ast}_{ij} \rfloor &&  \text{up-branch}:
      x_{ijmt} = 0~ \forall t \in \mathcal{T}: t < \lceil s^{\ast}_{ij} \rceil  \label{BranchingStartTimes} 
\end{align}
\endgroup
Constraints \eqref{BranchingStartTimes} can be enforced efficiently in practice by adjusting the bounds of the variables $x_{ijmt}$ or removing them, which maintains the structure of the problem.

\begin{proposition}
      For $\vert \mathcal{M} \vert = 1$, adding Constraints \eqref{BranchingStartTimes} to pricing problems $\min \{\eqref{reducedCosts}: \eqref{Pricing::AssCons}-\eqref{Pricing::XDomains}\}$, does not improve the lower bound strength of the master formulation beyond the compact formulation, i.e., $z^{\eqref{BranchingStartTimes}}_{LMP} = z^{\eqref{BranchingStartTimes}}_{LP}$.\label{proposition::BranchingStartTimes}
\end{proposition}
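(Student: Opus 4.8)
The plan is to reduce the claim entirely to the integrality property via Theorem \ref{theorem::IntegralityProperty}. The crucial observation is that the branching constraints \eqref{BranchingStartTimes} merely fix a collection of pulse variables $x_{ijmt}$ to zero; they introduce no genuinely new facets into the pricing polytope. I would therefore argue that fixing variables to zero preserves the integrality property, so that the branched pricing problem $\min\{\eqref{reducedCosts}: \eqref{Pricing::AssCons}-\eqref{Pricing::XDomains}, \eqref{BranchingStartTimes}\}$ remains integral, whence Theorem \ref{theorem::IntegralityProperty}, applied to the compact and master formulations that both carry the constraints \eqref{BranchingStartTimes}, immediately yields $z^{\eqref{BranchingStartTimes}}_{LMP} = z^{\eqref{BranchingStartTimes}}_{LP}$.

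To carry this out, first I would invoke Proposition \ref{proposition::PSP} to recall that for $\vert \mathcal{M} \vert = 1$ the unbranched pricing polytope $P_n$, i.e. the feasible region of the linear relaxation of \eqref{Pricing::AssCons}-\eqref{Pricing::XDomains}, is integral in the sense of Definition \ref{definition::Integrality property}: it equals the convex hull of its integer points. Next, since every point of $P_n$ satisfies $x_{ijmt} \geq 0$, each branching equality $x_{ijmt} = 0$ is induced by the valid inequality $-x_{ijmt} \leq 0$ and hence cuts out a face of $P_n$. Imposing all of the fixings in \eqref{BranchingStartTimes} simultaneously intersects several such faces, and an intersection of faces is again a face. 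I would then appeal to the standard polyhedral fact that a face of an integral polytope is itself integral to conclude that the branched polytope $P_n \cap \{x : x_{ijmt} = 0\}$ (the intersection taken over the fixed indices) is integral.

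It then remains to identify this face with the convex hull of the branched integer solutions. Its vertices are integral and are exactly those integer points of $P_n$ satisfying the fixings, i.e. the feasible integer solutions of the branched pricing problem; hence the branched LP polytope equals the convex hull of the branched integer feasible set, which is precisely the integrality property for the branched pricing problem. With this established, Theorem \ref{theorem::IntegralityProperty} applies verbatim with the branched pricing constraints playing the role of the subsystem $Ax \leq a$, giving the desired equality $z^{\eqref{BranchingStartTimes}}_{LMP} = z^{\eqref{BranchingStartTimes}}_{LP}$.

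The hard part will not be any deep computation but rather making the ``face of an integral polytope is integral'' step airtight in this setting: in particular, verifying that the fixings $x_{ijmt}=0$ genuinely define a face (which hinges on $x_{ijmt} \geq 0$ being valid for $P_n$) rather than a lower-dimensional slice that could introduce fractional vertices, and confirming that both the down-branch and the up-branch reduce exactly to such zero-fixings. I would also want to check that the branched feasible set remains nonempty wherever the identification is needed, so that the convex-hull argument is not vacuous.
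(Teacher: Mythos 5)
Your proposal is correct and follows essentially the same route as the paper's proof: show that the zero-fixings \eqref{BranchingStartTimes} preserve the integrality property of the pricing polytope established in Proposition \ref{proposition::PSP}, then conclude $z^{\eqref{BranchingStartTimes}}_{LMP} = z^{\eqref{BranchingStartTimes}}_{LP}$ via Theorem \ref{theorem::IntegralityProperty}. Your face-of-an-integral-polytope argument is in fact a more airtight rendering of the paper's key step, which merely asserts that setting variables of a $[0,1]$-polytope to zero ``may remove but does not add new vertices''---your observation that $x_{ijmt}=0$ defines a face because $x_{ijmt}\geq 0$ is valid supplies exactly the justification that claim needs.
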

\begin{proof}
    We prove this claim by showing that adding Constraints \eqref{BranchingStartTimes} to the pricing problem  maintains its integrality property. 
    Let $\mathfrak{P} := \{x \in [0,1]: \eqref{Pricing::AssCons}-\eqref{Pricing::DisPredCons}\}$ be the polytope corresponding to problem $\min \{\eqref{reducedCosts}: \eqref{Pricing::AssCons}-\eqref{Pricing::XDomains}\}$ and $\Pi(\mathfrak{P})$ be set of vertices of polytope $\mathfrak{P}$.  Adding Constraints \eqref{BranchingStartTimes} to $\mathfrak{P}$ results in a new polytope $\mathfrak{P}^\prime \subset \mathfrak{P}$. The constraints correspond to setting a subset of variables of a $[0,1]$-polytope to zero, which may remove but does not add new vertices, and we get $\Pi(\mathfrak{P}^\prime) \subseteq \Pi(\mathfrak{P})$. Finally, considering Proposition \ref{proposition::PSP}, which says that $\Pi(\mathfrak{P}) \in \mathbb{Z}$ because it is the convex hull of a set of integer points, we get $\Pi(\mathfrak{P}^\prime) \subseteq  \Pi(\mathfrak{P}) \in \mathbb{Z} \implies \text{conv}(\mathfrak{P}^\prime \cup \mathbb{Z}) =\text{conv}(\mathfrak{P}^\prime)$. Now, $z^{\eqref{BranchingStartTimes}}_{LMP} = z^{\eqref{BranchingStartTimes}}_{LP}$  follows from Theorem \ref{theorem::IntegralityProperty}.
\end{proof}

Proposition \ref{proposition::BranchingStartTimes} shows that branching on start times does not improve the relaxation strength of the master problem compared to the compact formulation. This means if we use branching on start times for both the master and compact formulation, we will generate the same branching tree.

\subsection{Branching on Resource Demands} \label{subsec::BranchingResourceDemands}

\cite{coughlan2015} and \cite{vanEeckhout2020} propose three branching rules based on the resource demands $r_{knt \omega}$ of the columns (schedules) $\lambda_{n \omega} \in \Tilde{\Omega}_{n}$. For some fractional solution, let $r^{\ast}_{knt}$ be the resource demand of activities $(i,j) \in \mathcal{V}_n$ for resource $k$ and period $t$, the three suggested branching rules use the following constraints:
\begingroup
\allowdisplaybreaks
\begin{align}
&\text{down-branch}: \sum_{k \in \mathcal{R}}\sum_{t \in \mathcal{T}}r_{knt} \leq \floor[\Bigg]{\sum_{k \in \mathcal{R}}\sum_{t \in \mathcal{T}}r^{\ast}_{kt}} &&
   \text{up-branch}:\sum_{k \in \mathcal{R}}\sum_{t \in \mathcal{T}}r_{knt} \geq \ceil[\Bigg]{\sum_{k \in \mathcal{R}}\sum_{t \in \mathcal{T}}r_{knt} r^{\ast}_{knt}} \label{BranchingResourceDemands1}\\
&\text{down-branch}: \sum_{k \in \mathcal{R}}r_{knt} \leq \floor[\Bigg]{\sum_{k \in \mathcal{R}}r^{\ast}_{knt}} &&
   \text{up-branch}: \sum_{k \in \mathcal{R}}r_{knt} \geq \ceil[\Bigg]{\sum_{k \in \mathcal{R}}r^{\ast}_{knt}}\label{BranchingResourceDemands2} \\
&\text{down-branch}:r_{knt} \leq \floor[\big]{r^{\ast}_{knt}} &&
   \text{up-branch}:r_{knt} \geq \ceil[\big]{ r^{\ast}_{knt}} \label{BranchingResourceDemands3}, \\
&\text{where }  r_{knt} = \sum_{(i, j) \in \mathcal{V}_n} \sum_{m \in \mathcal{M}_{ij}} \sum^{t}_{s = t - d_{ijm}+1} r_{ijmk} x_{ijms}  \notag
\end{align}
\endgroup
The branching constraints \eqref{BranchingResourceDemands1} and \eqref{BranchingResourceDemands2} can be considered non-renewable resource constraints that only apply to the multi-mode case.  In contrast, Constraints \eqref{BranchingResourceDemands3} are classical resource constraints that can also be applied to the single-mode variant. Our analysis shows that adding just one constraint of type \eqref{BranchingResourceDemands3} breaks the structure of the pricing problem.

\begin{proposition}
    Adding branching constraints \eqref{BranchingResourceDemands3} to the pricing problems $\min \{\eqref{reducedCosts}: \eqref{Pricing::AssCons}-\eqref{Pricing::XDomains}\}$ strengthens the master formulation beyond the compact formulation, i.e., $z^{\eqref{BranchingResourceDemands3}}_{LP} \leq z^{\eqref{BranchingResourceDemands3}}_{LMP}$.  \label{proposition::BranchingResourceDemands}
\end{proposition}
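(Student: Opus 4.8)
The plan is to separate the two halves of the statement: the weak inequality, which I expect to follow almost immediately from the general theory, and the genuine strengthening, which is the substantive content. First I would observe that each branching constraint of type \eqref{BranchingResourceDemands3} involves only the start variables $x_{ijms}$ of activities $(i,j) \in \mathcal{V}_n$ belonging to a single partition. Consequently the constraint can be absorbed into the block $\mathcal{A}_n$ of the pricing problem rather than into the coupling constraints, so that the master formulation obtained after branching is exactly the Dantzig-Wolfe reformulation of the compact formulation augmented by the same constraint. With the branching constraint sitting inside the block, the feasible-region comparison of Theorem \ref{theorem:LowerBounds} applies verbatim and yields $z^{\eqref{BranchingResourceDemands3}}_{LP} \leq z^{\eqref{BranchingResourceDemands3}}_{LMP}$.

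The more interesting part is to argue that the inequality is genuine, i.e. that the decomposition strengthens the bound rather than merely preserving it, in contrast with Proposition \ref{proposition::BranchingStartTimes}. In the start-time case the branching constraints merely fix a subset of variables of an integral $[0,1]$-polytope to zero, which produces a face and hence preserves integrality. In the present case constraint \eqref{BranchingResourceDemands3} is instead a genuine knapsack-type resource inequality that cuts across the polytope, and such a cut can create fractional vertices. Denoting by $\mathcal{A}_n^{\,\eqref{BranchingResourceDemands3}}$ the augmented feasible set of the pricing problem, the master optimizes over $\mathrm{conv}(\mathcal{A}_n^{\,\eqref{BranchingResourceDemands3}})$ while the compact LP optimizes only over its linear relaxation. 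The key step is therefore to show that $\mathcal{A}_n^{\,\eqref{BranchingResourceDemands3}}$ loses the integrality property, so that the inclusion $\mathrm{conv}(\mathcal{A}_n^{\,\eqref{BranchingResourceDemands3}}) \subsetneq \{x \in [0,1] : \eqref{Pricing::AssCons}-\eqref{Pricing::DisPredCons}, \eqref{BranchingResourceDemands3}\}$ is strict; by Theorem \ref{theorem::IntegralityProperty} the forced equality of Proposition \ref{proposition::BranchingStartTimes} then no longer holds. This mirrors the reasoning behind Proposition \ref{proposition::RCPSP}, where a single resource constraint already suffices to destroy integrality.

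To turn the possible strictness into a proven one, I would exhibit a minimal single-mode instance in which the rounded demand cap induces a cover-type inequality that is not valid for the unconstrained assignment-precedence polytope. The plan is to take one partition whose activities can fractionally share the capped resource-period in a way that the integer schedules cannot, and to pick dual values (equivalently an objective of the form \eqref{reducedCosts}) for which the compact LP optimum is attained at exactly such a fractional point. Convexifying over the integer schedules of the augmented block eliminates this point, so the master attains a strictly larger value and $z^{\eqref{BranchingResourceDemands3}}_{LP} < z^{\eqref{BranchingResourceDemands3}}_{LMP}$.

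I expect the main obstacle to be this explicit witness rather than the weak inequality. Two things must happen at once: the augmented pricing polytope must possess a fractional vertex that is cut off by convexification, and that vertex must actually be the optimizer of the compact LP, so that the gap is visible in the objective and not merely in the feasible region. Since the two-activity configuration is already an integral (transportation-type) polytope, the instance must be chosen large enough to produce a genuine cover structure yet small enough to verify by hand, and the duration, demand, and cost coefficients must be balanced so that both conditions hold simultaneously — this calibration is the delicate part of the argument.
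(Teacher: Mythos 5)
Your first half coincides with the paper's: the branching constraint \eqref{BranchingResourceDemands3} involves only variables of a single block, so it is absorbed into the pricing problem and Theorem \ref{theorem:LowerBounds} immediately gives $z^{\eqref{BranchingResourceDemands3}}_{LP} \leq z^{\eqref{BranchingResourceDemands3}}_{LMP}$; that part is complete and correct. You also identify the right mechanism for the second half — the augmented pricing problem must lose the integrality property of Proposition \ref{proposition::PSP}, so that the forced equality of Theorem \ref{theorem::IntegralityProperty} no longer applies. But there your argument stops at a plan: the explicit fractional-vertex witness, which you yourself flag as ``the delicate part,'' is never constructed, and without it you have shown only that strictness is \emph{possible}, not that it \emph{occurs}. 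Since the weak inequality holds for any branching rule whatsoever (it would hold equally under start-time branching), the entire content of the proposition relative to Proposition \ref{proposition::BranchingStartTimes} lives in exactly the step you left open. That is a genuine gap.

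The paper closes this step by a different route that avoids instance calibration entirely: it shows in \ref{app::SpecialCase} that adding a \emph{single} constraint of type \eqref{BranchingResourceDemands3} makes the pricing problem contain the NP-hard multiple-choice knapsack problem as a special case (one project, one mode, no precedences, one resource, unit durations; the assignment constraints \eqref{Pricing::AssCons} become the choice classes, and the capped resource-period becomes the knapsack). As in Proposition \ref{proposition::RCPSP}, NP-hardness rules out the integrality property unless P $=$ NP, so for some instance the LMP feasible region is a proper subset of the LP region; picking cost coefficients that make a cut-off point the unique LP optimizer then yields $z^{\eqref{BranchingResourceDemands3}}_{LP} < z^{\eqref{BranchingResourceDemands3}}_{LMP}$. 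Note the trade-off: the paper's argument is conditional on P $\neq$ NP but needs no explicit polytope computation, whereas your route, if completed, would be unconditional and in that sense stronger. The cheapest way to finish your version is to run the paper's reduction in reverse: any small multiple-choice knapsack instance whose LP relaxation has a fractional optimal vertex (two classes and a capacity that forces one item to be split) instantiates, through that construction, precisely the witness your outline calls for — which also explains why your two-activity configurations kept coming out integral.
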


\begin{proof}[Proof]
     From Theorem \ref{theorem:LowerBounds} we know $z^{\eqref{BranchingResourceDemands3}}_{LP} \leq z^{\eqref{BranchingResourceDemands3}}_{LMP}$. It remains to show that there exists at least some case, where $z^{\eqref{BranchingResourceDemands3}}_{LP} < z^{\eqref{BranchingResourceDemands3}}_{LMP}$. As we show in \ref{app::SpecialCase}, the pricing problem $\min \{\eqref{reducedCosts}: \eqref{Pricing::AssCons}-\eqref{Pricing::XDomains}\}$ becomes NP-hard after adding only one constraint of type \eqref{BranchingResourceDemands3}. Hence, the feasible region of the linear relaxation of the compact formulation $LP$ is a proper subset of the linear relaxed master problem $LMP$. This implies the existence of a point $x^\ast \in LP \land x^\ast \notin LMP$. For the right choices of the cost coefficients $c_{ijmt}$ and $c_{k}$, the point $x^\ast$ is the unique optimum for $LP$ and we get $z^{\eqref{BranchingResourceDemands3}}_{LP} < z^{\eqref{BranchingResourceDemands3}}_{LMP}$.
\end{proof}

Proposition \ref{proposition::BranchingResourceDemands} shows that branching on resource demands improves the strength of the extensive over the compact formulation because adding resource constraints makes the pricing problem NP-hard. As a consequence, the lower bounds of the extensive formulation may be stronger. Thus, at least theoretically, branching on resource demands is stronger than branching on the start times of activities. This was also observed in practice by \cite{coughlan2015}, inspiring the hierarchical branching scheme that first branches on resource demands before branching on start times. Among the three different resource-based branching rules \eqref{BranchingResourceDemands1}, \eqref{BranchingResourceDemands2}, and  \eqref{BranchingResourceDemands3}, no substantial difference in runtime could be observed by \cite{vanEeckhout2020}.

\section{Synthesis of Results}\label{sec::Synthesis}

For classical single-mode project scheduling problems, the Dantzig-Wolfe decomposition yields a pricing problem that can be solved as a linear program (Proposition \ref{proposition::PSP}). However, the linear relaxation of the master problem is as weak as the relaxation of the compact PDDT formulation (Theorem \ref{theorem::IntegralityProperty} and Proposition \ref{proposition::PSP}). Also, branching on start times does not improve the relaxation strength compared to the compact formulation (Proposition \ref{proposition::BranchingStartTimes}). Hence, a branch-and-price algorithm that branches on start times generates the same branching trees as a classical branch-and-bound algorithm applied to the compact formulation. Consequently, branch-and-price will only be faster if we can solve the nodes of the branching tree faster. However, our results show that solving the linear relaxation of the master problem is much more time-consuming than solving the relaxation of the compact formulation. While we can potentially reduce the size of the branching tree by branching on resource demands (Proposition \ref{proposition::BranchingResourceDemands}), this will break the structure of the pricing problem, resulting in an NP-hard problem for which no efficient solution approach exists.  Moreover, solving only the relaxation of the master problem may requires many iterations due to the strong degeneracy (Proposition \ref{proposition::Degeneracy}), and in most cases, takes more time than solving a compact formulation to proven optimality. Therefore, strengthening the formulation by branching on resource demands will not yield a competitive algorithm.

For multiple modes, applying Dantzig-Wolfe decomposition strengthens the linear relaxation (Proposition \ref{proposition::MPSP}). However, our numerical experiments show that the stronger relaxations translate only into marginal lower-bound improvements, which are offset by a disproportionate increase in runtime. Therefore, as for the single-mode case, column generation is not competitive compared to directly solving a compact formulation via a commercial solver.

For the special case of dedicated resources, Dantzig-Wolfe decomposition can be applied to decompose the problem by dedicated resources. This decomposition strengthens the relaxation strength (Proposition \ref{proposition::RCPSP}) and substantially improves the lower bounds as observed by \cite{coughlan2015}. Despite the improved lower bounds, the numerical study by \cite{coughlan2015} shows that even with many algorithmic enhancements, such a decomposition struggles to solve instances of 50 activities to optimality. Solving the pricing problems, which are NP-hard scheduling problems, is the bottleneck for this decomposition. Alleviating this bottleneck would require an efficient approach for solving the pricing problem. As the pricing problem has the same structure as the original formulation, such an algorithm could be used to directly solve the compact formulation. 

Lastly, for the preemptive case, an alternative model formulation based on antichains can be used. This formulation utilizes many of the benefits of branch-and-price. First, the pricing problems are NP-hard knapsack problems, which result in stronger relaxations; \cite{moukrim2015} reports new best lower bounds for the PSPLIB instances that even exceed lower bounds from the non-preemptive setting. Second, the knapsack problem is well-studied, and thus, many tailored solution algorithms exist. Third, the decomposition breaks symmetries and allows for the aggregation of pricing problems. While this suggests that the branch-and-price for the PRCPSP performs well, computational results comparing against compact formulations are missing.

\section{Conclusion and Future Research}\label{sec::Conculsion}
We have provided results showing that the structure of Dantzig-Wolfe reformulated multi- and single-mode project scheduling problems is unfavorable for column generation. Therefore, we recommend not applying the column generation approaches from the literature for classical project scheduling problems but encouraging research on alternative formulations and Dantzig-Wolfe decompositions. This research should aim to develop formulations that have hard pricing problems for which efficient solution approaches exist. Furthermore, it should address degeneracy by formulating a master problem that is not prone to degeneracy or by taking countermeasures such as utilizing dual stabilization methods.

For special variants of the RCPSP, such as preemptive activities and dedicated resources, branch-and-price can lead to stronger relaxations due to NP-hard pricing problems. While some papers have shown that column generation applied to these special variants yields strong lower bounds, it remains unclear if the lower bound strength justifies the increased complexity of column generation. Future work could explore the trade-off between runtime and relaxation strength to assess the merit of branch-and-price for special cases. Furthermore, identifying other RCPSP variants with properties desirable for branch-and-price constitutes an avenue for future research. 

\section*{Funding}

The first author, Maximilian Kolter, is funded by Deutsche Forschungsgemeinschaft (DFG, German Research Foundation) - GRK 2201/2 - Projektnummer 277991500.
\bibliographystyle{elsarticle-harv} 
\bibliography{cas-ref}

\appendix
\section{Proof of Proposition \ref{proposition::MPSP}}\label{app::CounterExample}

\begin{proof}
 Consider the pricing problem $\min \{\eqref{reducedCosts}: \eqref{Pricing::AssCons}-\eqref{Pricing::XDomains}\}$ for a project $i \in \mathcal{P}$ with five activities ($\mathcal{V}_i = \{1, \ldots, 5\}$), two modes ($\vert \mathcal{M} \vert = 2$), activity durations as given in Table \ref{table::ActivityCharateristics}, precedence relations $\mathcal{E}_{i} = \{(1,2), (1,3), (2,5), (3,4), (4,5)\}$, a time horizon of eight periods ($\vert \mathcal{T} \vert = 8$), and the following costs:
    \begin{align}
        c_{ijmt} \begin{cases}
        1, & \text{if } m = 1, \\
       1, & \text{if } j = 4 \land m = 2 \land (t = 3 \lor t = 4), \\
       0, & \text{otherwise.}
    \end{cases}
    \end{align}
    The optimal solution to the linear relaxation  $\min \{\eqref{reducedCosts}: \eqref{Pricing::AssCons}-\eqref{Pricing::DisPredCons},x \in \mathbb{R}\}$ is $x_{111} = x_{121}= x_{213}= x_{222}= x_{322}= x_{415}= x_{416}= x_{517}= x_{526} = \frac{1}{2}$ and zero for all other variables. Because the solution is fractional it must hold that $conv(\{\eqref{Pricing::AssCons}-\eqref{Pricing::XDomains}\}) \neq \{\eqref{Pricing::AssCons}-\eqref{Pricing::DisPredCons},~x \in \mathbb{R}\}$, meaning the problem is not integral.
\end{proof}

\begin{table}[!h]
\renewcommand*{\arraystretch}{0.65}
    \centering
    \begin{tabular}{cccccc} \toprule
     \multirow{2}{*}{Mode $m$} & \multicolumn{5}{c}{Activity $j$}                \\\cmidrule(rl){2-6} 
     & 1 & 2 & 3 & 4 & 5\\   \midrule  
     1 & 1 & 1 & 1 & 1 & 1\\
     2 & 2 & 5 & 3 & 2 & 2\\ \bottomrule
    \end{tabular}
    \caption{Activity durations.}
    \label{table::ActivityCharateristics}
\end{table}

\section{Special Case for Proof of Propositon \ref{proposition::BranchingResourceDemands}}\label{app::SpecialCase}
As shown in the following, the pricing problem $\min \{\eqref{reducedCosts}: \eqref{Pricing::AssCons}-\eqref{Pricing::XDomains}\}$ becomes NP-hard after adding only one branching constraint \eqref{BranchingResourceDemands3} because it contains the NP-hard multiple-choice knapsack problem as special case. Consider a single-mode single-project instance ($\vert \mathcal{M} \vert = \vert \mathcal{P} \vert = 1$)  without precedence constraints ($\mathcal{E} = \emptyset$), a single resource ($\vert \mathcal{R} \vert = 1$), and activities that all have a duration of one unit ($d_{ijm} = 1 ~\forall j \in \mathcal{V}$).
Suppose we add branching constraint \eqref{BranchingResourceDemands3} for the resource $k^\prime$, period $t^\prime$, and right-hand-side $\lfloor r^{\ast}_{k^\prime t^\prime} \rfloor$ to the pricing problem. If we drop all indices for which the corresponding sets have cardinality one and if we transform the problem into a maximization problem by multiplying all cost coefficients in the objective by minus one, the problem reads as follows:
\begin{align}
        \max~&  \sum_{j \in \mathcal{V}}\sum_{t \in \mathcal{T}} \label{MCKP::obj} (c^{\eqref{DW::ResourceUsage}}_{jt}-c_{jt}) x_{jt}  + \pi_n   \\
        \text{s.t.}.~&\sum_{t \in \mathcal{T}} x_{jt} = 1 && \forall j \in \mathcal{V} \\    
        &\sum_{j \in \mathcal{V}} r_{j} x_{jt^\prime} +  \sum_{j \in \mathcal{V}}\sum_{j \in \mathcal{T}\setminus{\{t^\prime\}}} 0\times x_{jt} \leq \lfloor r^{\ast}_{t^\prime} \rfloor \\
        & x_{jt} \in \{0,1\} && \forall j \in \mathcal{V},~ t \in \mathcal{T}  \label{MCKP::xDomains}
\end{align}
Now, it is easy to see that the problem \eqref{MCKP::obj}-\eqref{MCKP::xDomains} is a multiple-choice knapsack problem with $\vert \mathcal{T} \vert$ classes of items $j \in \mathcal{V}$ that need to be packed into a knapsack with capacity $\lfloor r^{\ast}_{t^\prime} \rfloor$. Where, each item $j \in \mathcal{V}$ of class $t \in \mathcal{T}$ has a profit of $(c_{jt}-c^{\eqref{DW::ResourceUsage}}_{jt})$ units and a weight of $r_{j}$ units if $t = t^\prime$ and $0$ units otherwise.

\end{document}